\theoremstyle{plain} 
\newtheorem{theorem}{Theorem}[section]
\newtheorem{lemma}[theorem]{Lemma}
\newtheorem{corollary}[theorem]{Corollary}
\theoremstyle{definition} 
\newtheorem{remark}[theorem]{Remark}
\numberwithin{equation}{section}
\newcommand{\skal}[2]{\langle #1,#2\rangle}
\newcommand{\tilh}{\tilde h}
\begin{document}
\baselineskip=17pt

\title[Differential as a harmonic morphism]{Differential as a harmonic morphism with respect to Cheeger--Gromoll type metrics}
\author{W. Koz\l owski \and K. Niedzia\l omski}
\date{\today}
\subjclass[2000]{53C07; 53C43; 53A30}
\keywords{Horizontally conformal mapping, harmonic morphism, tangent bundle, Cheeger-Gromoll type metric.}
\address{
Department of Mathematics and Computer Science \endgraf
University of \L\'{o}d\'{z} \endgraf
ul. Banacha 22, 90-238 \L\'{o}d\'{z} \endgraf
Poland
}
\email{wojciech@math.uni.lodz.pl}

\address{
Department of Mathematics and Computer Science \endgraf
University of \L\'{o}d\'{z} \endgraf
ul. Banacha 22, 90-238 \L\'{o}d\'{z} \endgraf
Poland
}
\email{kamiln@math.uni.lodz.pl}

\begin{abstract}
We investigate horizontal conformality of a differential of a map between Riemannian manifolds where the tangent bundles are equipped with Cheeger--Gromoll type metrics. As a corollary, we characterize the differential of a map as a harmonic morphism. 
\end{abstract}

\maketitle

\section{Introduction and preliminary results}
\subsection{Introduction}
In \cite{blw1}, M. Benyounes, E. Loubeau and C. M. Wood intoduced a new class of Riemannian metrics in the tangent bundle over a Riemanian manifold. These metrics $h_{p,q}$, depending on two constants $p,q$, generalize Sasaki and Cheeger--Gromoll metrics. Wider class of this type of metrics was investigated by M. I. Munteanu \cite{mun} and by the authors \cite{kn}. Now, there are several articles concerning geometry of a tangent bundle equipped with Cheeger--Gromoll type metric. For example, M. Benyounes, E. Loubeau and C. M. Wood \cite{blw2} considered $h_{p,q}$ metrics in the context of harmonic maps, whereas the first named author and Sz. M. Walczak \cite{kw} in the context of Riemanian submersions and Gromov--Hausdorff topology.

In \cite{kn}, the authors studied the conformality of a differential $\Phi=\varphi_{\ast}:(TM,\tilh)\to (TN,h)$ of a map $\varphi:M\to N$ between Riemanian manifolds, where $\tilh=h_{p,q,\alpha}$ and $h=h_{r,s,\beta}$ are metrics of Cheeger--Gromoll type. In this paper, we continue considerations concerning the differential of a map. We give necessary and sufficient conditions for a differential to be horizontally conformal and as a corollary we characterize the differential as a harmonic morphisms. 

The idea is the following. The vertical part of Cheeger--Gromoll type metrics is nonlinear with respect to the base point, excluding Sasaki metric. Hence, conformal change of a metric on $M$ does not effect conformal change of Cheeger--Gromoll type metric. Scaling the base point, the condition of horizontal conformality reduces to vanishing of a certain polynomial, which gives restrictions to coefficients $p,q,r,s$ and $\alpha,\beta$. That is why, for horizontal conformality of $\Phi$ there are not many natural choices of $\tilh$ and $h$ (Theorem \ref{maint}), whereas in the context of a harmonic morphism the only possibility is Sasaki metric (Theorem \ref{hmt}). 

\subsection{Preliminary results}
Let $(M,g)$ be a Riemannian manifold and $\pi:TM\to M$ its tangent bundle. The Levi-Civita connection and the projection $\pi$ gives the natural splitting $TTM=\mathcal{H}\oplus \mathcal{V}$ of the second tangent bundle $\pi_{\ast}:TTM\to TM$, where the vertical distribution $\mathcal{V}$ is the kernel of $\pi_{\ast}$ and the horizontal distribution $\mathcal{H}$ is the kernel of the conection map $K$. If $X,\xi\in T_xM$ then there is a unique vertical vector $X^v_{\xi}$ and a unique horizontal vector $X^h_{\xi}$ in $T_{\xi}TM$ such that $\pi_{\ast}X^h_{\xi}=X$ and $KX^v_{\xi}=X$. Moreover, any $A\in T_{\xi}TM$ has a unique decomposition into horozontal and vertical part, $A=\mathcal{V}A+\mathcal{H}A$. For more details on decomposition of the tangent bundle see \cite{dom}.

Let $p,q,\alpha$ be constants, $q$ non--negative, $\alpha$ positive. Define $(p,q,\alpha)$-metric $h=h_{p,q,\alpha}$ on $TM$ as follows. For every $A,B\in T_{\xi}(TM)$,
\[
h(A,B)=g(\pi_\ast A,\pi_\ast B)+\omega_{\alpha}(\xi)^{p}\big(g(KA,KB)+qg(KA,\xi)g(KB,\xi)\big),
\]
where  $\omega_{\alpha}(\xi)=(1+\alpha g(\xi,\xi))^{-1}$. The Riemannian metric $h_{p,q,\alpha}$ is a generalization of the metric considered in \cite{blw1,blw2} and is a special case of a metric considered in \cite{mun}. In particular, $h_{0,0,\alpha}$ (or $h_{p,0,0}$) is Sasaki metric, $h_{1,1,1}$ Cheeger-Gromoll metric.

We will often write $\skal{\cdot}{\cdot}_M$ for $g$ and $\skal{\cdot}{\cdot}_{TM}$ for $h=h_{p,q,\alpha}$. The lenght of a vector will be denoted by $|\cdot|_M$ and $|\cdot|_{TM}$, respecitively.

Consider now a smooth map $\varphi:M\to N$ between Riemannian manifolds $M$ and $N$. Let $\varphi^{-1}TN\to M$ be a pull--back bundle. There is a unique connection $\nabla^{\varphi}$ in this bundle characterised by the property \cite{bw}
\[
\nabla^{\varphi}_X(Y\circ\varphi)=\nabla^N_{\varphi_{\ast}X} Y,\quad X\in T_xM, Y\in\Gamma(TN).
\]
Then we easily obtain
\begin{equation} \label{e1}
\nabla^{\varphi}_X\varphi_{\ast}Y-\nabla^{\varphi}_Y\varphi_{\ast}X=\varphi_{\ast}[X,Y],\quad X,Y\in\Gamma(TM).
\end{equation} 
The second fundamental form of $\varphi$ is $B=\nabla\varphi_{\ast}$,
\begin{equation} \label{e2}
B(X,Y)=\nabla^{\varphi}_X \varphi_{\ast}Y-\varphi_{\ast}(\nabla^M_XY), \quad X,Y\in \Gamma(TM).
\end{equation}
By \eqref{e1}, we get that $B$ is symmetric and hence tensorial in both variables. If $B=0$ we say that $\varphi$ is {\it totally geodesic}. We will need the following lemma.
\begin{lemma} \label{l1} 
For $X,\xi\in T_xM$
\begin{align}
\varphi_{\ast\ast}X^v_{\xi} &=(\varphi_{\ast}X)^v_{\varphi_{\ast}\xi}, \label{l1e1} \\
\varphi_{\ast\ast}X^h_{\xi} &=(\varphi_{\ast}X)^h_{\varphi_{\ast}\xi}+(B(X,\xi))^v_{\varphi_{\ast}\xi}. \label{l1e2}
\end{align}
\end{lemma}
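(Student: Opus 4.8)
The plan is to verify both identities by representing each lift as the velocity vector of an explicit curve in $TM$, pushing that curve forward by $\Phi=\varphi_{\ast}$, and then reading off the horizontal and vertical components of the resulting velocity in $TTN$ by applying the projection and the connection map of $N$. This reduces everything to the defining properties of the lifts, together with the characterization of the connection map as a covariant derivative along a curve.

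For the vertical identity \eqref{l1e1}, I would use that the fiber $T_xM$ is a vector space, so $X^v_\xi$ is the velocity at $t=0$ of the curve $t\mapsto \xi+tX$ lying entirely in that fiber. Since $\varphi_{\ast}$ restricts to the linear map $T_xM\to T_{\varphi(x)}N$, it carries this curve to $t\mapsto \varphi_{\ast}\xi+t\,\varphi_{\ast}X$, whose velocity at $t=0$ is by definition $(\varphi_{\ast}X)^v_{\varphi_{\ast}\xi}$. This settles \eqref{l1e1} essentially without computation.

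For the horizontal identity \eqref{l1e2}, I would realize $X^h_\xi$ as follows: pick a curve $\gamma$ in $M$ with $\gamma(0)=x$, $\dot\gamma(0)=X$, and let $\xi(t)$ be the parallel transport of $\xi$ along $\gamma$. Then $t\mapsto\xi(t)$ is a curve in $TM$ whose velocity at $t=0$ satisfies $\pi_{\ast}=\dot\gamma(0)=X$ and $K=\nabla^M_{\dot\gamma}\xi|_0=0$, so it equals $X^h_\xi$. Pushing forward, $W(t):=\varphi_{\ast}\xi(t)$ is a vector field along $c:=\varphi\circ\gamma$ in $N$, and I would compute the two components of $\dot W(0)\in T_{\varphi_{\ast}\xi}TN$ separately. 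The horizontal component is immediate, since $\pi^N_{\ast}\dot W(0)=\dot c(0)=\varphi_{\ast}X$, which produces the term $(\varphi_{\ast}X)^h_{\varphi_{\ast}\xi}$.

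The crux—and the only place where the geometry of $\varphi$ enters—is the vertical component $K^N\dot W(0)=\nabla^N_{\dot c}W|_0$. Identifying this covariant derivative along $c$ with the pull-back derivative $\nabla^{\varphi}_X(\varphi_{\ast}\xi)$ and invoking the definition \eqref{e2} of the second fundamental form gives $\nabla^{\varphi}_X\varphi_{\ast}\xi=\varphi_{\ast}(\nabla^M_X\xi)+B(X,\xi)$; since $\xi(t)$ is parallel, $\nabla^M_X\xi=0$, so this reduces to exactly $B(X,\xi)$, yielding the vertical term $(B(X,\xi))^v_{\varphi_{\ast}\xi}$ and completing \eqref{l1e2}. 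I expect the main obstacle to be the bookkeeping in this last step: one must carefully match base points and, to apply \eqref{e2} verbatim, extend $\xi$ to an honest vector field agreeing with the parallel field along $\gamma$, using the tensoriality of $B$ noted after \eqref{e2} to guarantee that the pointwise value $B(X,\xi)$ is well defined and independent of the extension.
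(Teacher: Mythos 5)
Your proposal is correct and is precisely the argument the paper has in mind when it says the lemma ``follows by the definition of a connection map and vertical and horizontal distributions'' with details left to the reader: the vertical lift via the linear curve $t\mapsto\xi+tX$ in the fibre, and the horizontal lift via parallel transport along $\gamma$, with $K^N$ of the pushed-forward velocity computed as $\nabla^{\varphi}_X\varphi_{\ast}\xi=B(X,\xi)$. Your closing remark on using the tensoriality of $B$ to make the pointwise value well defined is exactly the right care to take.
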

\begin{proof} Follows by the definition of a connection map and vertical and horizontal distributions. Details are left to the reader.
\end{proof}

Let $\varphi:(M,\skal{\cdot}{\cdot}_M)\to (N,\skal{\cdot}{\cdot}_N)$ be a smooth map between Riemannian manifolds, $\dim M>\dim N$. Let $\mathcal{V}^{\varphi}={\rm ker}\varphi_{\ast}$ be the vertical distribution and $\mathcal{H}^{\varphi}=(\mathcal{V}^{\varphi})^{\bot}$, the orthogonal complement of $\mathcal{H}^{\varphi}$ with respect to $\skal{\cdot}{\cdot}_M$, the horozontal distribution, $TM=\mathcal{V}^{\varphi}\oplus\mathcal{H}^{\varphi}$. Each $X\in T_xM$ has therefore a unique decomposition
\[
X=X^{\top}+X^{\bot}
\]
into vertical and horizontal part.

We say that $\varphi$ is {\it horizontally conformal} if for every $x\in M$ either $\varphi_{\ast x}=0$ or $\varphi_{\ast x}:\mathcal{H}^{\varphi}_x\to T_{\varphi(x)}N$ is surjective and
\[
\skal{\varphi_{\ast x}X}{\varphi_{\ast x}Y}_N=\lambda(x)\skal{X}{Y}_M,\quad X,Y\in\mathcal{H}^{\varphi}_x,
\]
where $\lambda(x)$ is positive. We call $\lambda$ the {\it dilatation} of $\varphi$. Let $C_{\varphi}$ denote the set of {\it critical points} i.e., points $x\in M$ such that $\varphi_{\ast x}=0$. One can easily prove

\begin{lemma} \label{l2}
For $\Phi=\varphi_{\ast}:TM\to TN$ we have $\pi(C_{\Phi})\subset C_{\varphi}$.
\end{lemma}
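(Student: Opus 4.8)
The plan is to unwind the definition of a critical point of $\Phi$ and feed it into formula \eqref{l1e1} of Lemma \ref{l1}, exploiting that vertical lifting is injective. Since $C_\Phi$ and $C_\varphi$ are both defined by the full vanishing of a differential, the inclusion $\pi(C_\Phi)\subset C_\varphi$ amounts to showing that if $\varphi_{\ast\ast}$ vanishes at some $\xi\in T_xM$, then $\varphi_{\ast x}=0$.

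First I would fix $\xi\in C_\Phi$ and put $x=\pi(\xi)$, so that $\xi\in T_xM$ and, by the definition of $C_\Phi$, the differential $\varphi_{\ast\ast}$ vanishes identically on the whole of $T_\xi(TM)$. In particular it annihilates every vertical vector, so $\varphi_{\ast\ast}X^v_\xi=0$ for all $X\in T_xM$. Applying \eqref{l1e1} then gives $(\varphi_{\ast}X)^v_{\varphi_{\ast}\xi}=0$ for every $X\in T_xM$. The decisive observation is that the vertical lift $Y\mapsto Y^v_\eta$ at any point $\eta\in TN$ is injective: this is immediate from the defining relation $KY^v_\eta=Y$ recalled in the preliminaries, which exhibits the connection map $K$ as a left inverse of the vertical lift. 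Hence $\varphi_{\ast}X=0$ for all $X\in T_xM$, that is $\varphi_{\ast x}=0$, which is exactly the assertion that $x=\pi(\xi)\in C_\varphi$.

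I do not expect a genuine obstacle here, and the only subtlety worth flagging is the choice of which vectors to test against. Restricting to vertical vectors and using \eqref{l1e1} is the clean route, since it transfers the vanishing of $\varphi_{\ast\ast}$ directly to the vanishing of $\varphi_{\ast}$ through the injective vertical lift. Arguing instead through horizontal vectors via \eqref{l1e2} would introduce the additional term $(B(X,\xi))^v_{\varphi_{\ast}\xi}$ coming from the second fundamental form, so that the vanishing condition would read $(\varphi_{\ast}X)^h_{\varphi_{\ast}\xi}+(B(X,\xi))^v_{\varphi_{\ast}\xi}=0$ and one would have to separate the horizontal and vertical components before concluding; this entanglement with $B$ is avoided entirely by the vertical argument, which therefore yields the inclusion at once.
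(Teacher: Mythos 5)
Your proof is correct; the paper offers no proof of this lemma (it is prefaced only by ``One can easily prove''), and your argument --- testing $\varphi_{\ast\ast}$ against vertical lifts, applying \eqref{l1e1}, and using that $K$ is a left inverse of the vertical lift --- is exactly the natural one-line verification the authors had in mind.
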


Moreover, by Lemma \ref{l1} we have
\begin{lemma} \label{lsub}
If a map $\varphi:M\to N$ is a submersion, i.e., surjective map of maximal rank, then so is $\Phi=\varphi_{\ast}:TM\to TN$.
\end{lemma}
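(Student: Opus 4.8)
The plan is to verify the two requirements in the definition of a submersion separately: that $\Phi=\varphi_{\ast}$ is surjective as a map $TM\to TN$, and that it has maximal rank, i.e.\ that its own differential $\Phi_{\ast}=\varphi_{\ast\ast}$ is surjective at every point of $TM$. Surjectivity of $\Phi$ is immediate: given any $v\in T_yN$, surjectivity of $\varphi$ provides $x\in M$ with $\varphi(x)=y$, and maximal rank of $\varphi$ makes $\varphi_{\ast x}\colon T_xM\to T_yN$ onto, so there is $\xi\in T_xM$ with $\varphi_{\ast x}\xi=v$; then $\Phi(\xi)=v$.

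For the rank, I would fix $\xi\in T_xM$, set $y=\varphi(x)$, and describe $\varphi_{\ast\ast}$ on $T_{\xi}(TM)=\mathcal{H}_{\xi}\oplus\mathcal{V}_{\xi}$ through Lemma \ref{l1}. Every element of $T_{\xi}(TM)$ has the form $X^h_{\xi}+Y^v_{\xi}$ with $X,Y\in T_xM$, and by \eqref{l1e1}--\eqref{l1e2},
\[
\varphi_{\ast\ast}(X^h_{\xi}+Y^v_{\xi})=(\varphi_{\ast}X)^h_{\varphi_{\ast}\xi}+\big(B(X,\xi)+\varphi_{\ast}Y\big)^v_{\varphi_{\ast}\xi}.
\]
A general target vector in $T_{\varphi_{\ast}\xi}(TN)$ is $W^h_{\varphi_{\ast}\xi}+U^v_{\varphi_{\ast}\xi}$ with $W,U\in T_yN$, so surjectivity of $\varphi_{\ast\ast}$ at $\xi$ reduces to solving $\varphi_{\ast}X=W$ together with $\varphi_{\ast}Y=U-B(X,\xi)$.

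The one point requiring care is that the horizontal lift picks up the extra vertical term $(B(X,\xi))^v$, so the horizontal and vertical components cannot be matched independently. I would resolve this in two steps, using that $\varphi_{\ast x}$ is onto: first choose $X$ with $\varphi_{\ast}X=W$ to match the horizontal part; then, with $X$ fixed, choose $Y$ with $\varphi_{\ast}Y=U-B(X,\xi)$ to correct the vertical part. This produces a preimage of the arbitrary target vector, whence $\varphi_{\ast\ast}$ is surjective at each $\xi$. Since $\dim M\ge\dim N$ forces $\dim TM\ge\dim TN$, surjectivity of $\varphi_{\ast\ast}$ is precisely maximal rank, and together with the surjectivity of $\Phi$ this shows that $\Phi$ is a submersion. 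I expect no serious obstacle here: the argument is a direct unwinding of Lemma \ref{l1}, the only subtlety being the order in which the two lifts are chosen.
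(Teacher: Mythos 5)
Your argument is correct and is exactly the one the paper intends: the paper gives no written proof, merely noting that the lemma follows from Lemma \ref{l1}, and your unwinding of \eqref{l1e1}--\eqref{l1e2} (matching the horizontal part first, then correcting the vertical part by the $B(X,\xi)$ term) is the standard way to make that citation explicit. No issues.
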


\begin{remark} \label{rem1}
By Lemmas \ref{l2} and \ref{lsub}, if $\varphi:M\to N$ and $\Phi=\varphi_{\ast}:TM\to TN$ are horizontally conformal, then the differential of horizontally conformal submersion $\varphi:M\setminus C_{\varphi}\to N$ is a horizontaly conformal submersion $\Phi:TM\setminus \pi^{-1}(C_{\varphi})\to TN$. Hence, throughout the paper, without loss of generality, we may assume that horizontaly conformal map $\varphi$ is a submersion i.e. the set of critical points is empty.
\end{remark}

\begin{theorem} \label{tPhi}
Assume $\varphi:M\to N$ is a submersion. Let $\Phi=\varphi_{\ast}:TM\to TN$ and equip tangent bundles $TM$ and $TN$ with Cheeger--Gromoll type metrics $h_{p,q,\alpha}$ and $h_{r,s,\beta}$, respectively. Then, with respect to submersion $\Phi$, the second tangent bundle splits into orthogonal sum $T_{\xi}TM=\mathcal{V}^{\Phi}_{\xi}\oplus\mathcal{H}^{\Phi}_{\xi}$, where
\begin{equation} \label{tPhie1}
\mathcal{V}^{\Phi}_{\xi}={\rm Span}\{\eta^h_{\xi}+(\nabla^M_{\xi}\eta)^v_{\xi} \mid \eta\in\Gamma(\mathcal{V}^{\varphi}) \}, 
\end{equation}
and
\begin{equation} \label{tPhie2}
\mathcal{H}^{\Phi}_{\xi}={\rm Span}\{ X^v_{\xi}-q\omega_q(\xi)\skal{X}{\xi}\xi^v_{\xi}+\omega_{\alpha}(\xi)^p(\nabla^M_{\xi}X)^h_{\xi} \mid X\in\Gamma(\mathcal{H}^{\varphi}) \}. 
\end{equation}
\end{theorem}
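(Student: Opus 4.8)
The plan is to obtain $\mathcal{V}^{\Phi}=\ker\Phi_{\ast}$ directly from Lemma \ref{l1} and then produce $\mathcal{H}^{\Phi}$ as its $h_{p,q,\alpha}$-orthogonal complement. Writing an arbitrary element of $T_{\xi}TM$ as $A=Y^h_{\xi}+Z^v_{\xi}$ with $Y,Z\in T_xM$, formulas \eqref{l1e1}--\eqref{l1e2} give
\[
\Phi_{\ast}A=(\varphi_{\ast}Y)^h_{\varphi_{\ast}\xi}+\big(B(Y,\xi)+\varphi_{\ast}Z\big)^v_{\varphi_{\ast}\xi}.
\]
Since horizontal and vertical lifts at $\varphi_{\ast}\xi$ are linearly independent, $A\in\ker\Phi_{\ast}$ if and only if $\varphi_{\ast}Y=0$ and $\varphi_{\ast}Z=-B(Y,\xi)$. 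Thus $Y=\eta\in\mathcal{V}^{\varphi}$, and for a section $\eta$ of $\mathcal{V}^{\varphi}$ the symmetry and tensoriality of $B$ together with $\varphi_{\ast}\eta\equiv0$ yield, via \eqref{e2}, the relation $\varphi_{\ast}(\nabla^M_{\xi}\eta)=-B(\eta,\xi)$. Hence $Z=\nabla^M_{\xi}\eta$ solves the second equation and every generator in \eqref{tPhie1} lies in $\ker\Phi_{\ast}$.

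To see that these generators exhaust $\mathcal{V}^{\Phi}$, note that $\Phi$ is a submersion by Lemma \ref{lsub}, so $\dim\mathcal{V}^{\Phi}=2(\dim M-\dim N)$. Evaluating the generators on sections $\eta$ with $\eta(x)=0$ produces exactly the vectors $W^v_{\xi}$ with $W\in\mathcal{V}^{\varphi}_x$, while general $\eta$ supplies the remaining $\dim M-\dim N$ directions; a short linear-independence argument then shows the span in \eqref{tPhie1} already has dimension $2(\dim M-\dim N)$, so it equals $\mathcal{V}^{\Phi}$.

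Next I would verify that each candidate
\[
H_X=X^v_{\xi}-q\,\omega_q(\xi)\skal{X}{\xi}\xi^v_{\xi}+\omega_{\alpha}(\xi)^p(\nabla^M_{\xi}X)^h_{\xi},\qquad X\in\Gamma(\mathcal{H}^{\varphi}),
\]
is $h_{p,q,\alpha}$-orthogonal to $\mathcal{V}^{\Phi}$. Writing $H_X=Y_2^h{}_{\xi}+Z_2^v{}_{\xi}$ with $Y_2=\omega_{\alpha}(\xi)^p\nabla^M_{\xi}X$ and $Z_2=X-q\omega_q(\xi)\skal{X}{\xi}\xi$, the defining form of $\omega_q(\xi)$ gives the key identity $\skal{Z_2}{\xi}=\omega_q(\xi)\skal{X}{\xi}$. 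Pairing $H_X$ with a generator $\eta^h_{\xi}+(\nabla^M_{\xi}\eta)^v_{\xi}$ through the formula for $h_{p,q,\alpha}$, the terms carrying $\skal{\nabla^M_{\xi}\eta}{\xi}$ cancel by this identity and what remains is $\omega_{\alpha}(\xi)^p\big(\skal{\nabla^M_{\xi}X}{\eta}+\skal{X}{\nabla^M_{\xi}\eta}\big)=\omega_{\alpha}(\xi)^p\,\xi\big(\skal{X}{\eta}\big)$, which vanishes because $\mathcal{V}^{\varphi}\perp\mathcal{H}^{\varphi}$ forces $\skal{X}{\eta}\equiv0$. Pairing with a generator $W^v_{\xi}$, $W\in\mathcal{V}^{\varphi}$, collapses for the same reason to $\omega_{\alpha}(\xi)^p\skal{X}{W}=0$. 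Hence $H_X\in(\mathcal{V}^{\Phi})^{\perp}$.

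Finally I would match dimensions. As $h_{p,q,\alpha}$ is Riemannian, $(\mathcal{V}^{\Phi})^{\perp}$ has dimension $2\dim N$. Choosing sections $X$ with $X(x)=0$ produces the horizontal lifts $W^h_{\xi}$ for all $W\in\mathcal{H}^{\varphi}_x$ (there $\nabla^M_{\xi}X|_x$ ranges over $\mathcal{H}^{\varphi}_x$), giving $\dim N$ independent vectors inside $\mathcal{H}$; for general $X$ the vertical-lift part of $H_X$ is $Z_2^v{}_{\xi}$, and the map $X(x)\mapsto Z_2$ is injective, since $Z_2=0$ makes $X(x)=q\omega_q(\xi)\skal{X(x)}{\xi}\xi$ a multiple of $\xi$, whence $X(x)=0$ because $q\omega_q(\xi)\skal{\xi}{\xi}<1$. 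As $\mathcal{H}$ and $\mathcal{V}$ are complementary in $T_{\xi}TM$, these two families give $2\dim N$ independent elements of the span \eqref{tPhie2}; being contained in the $2\dim N$-dimensional space $(\mathcal{V}^{\Phi})^{\perp}$, it coincides with it, so $T_{\xi}TM=\mathcal{V}^{\Phi}\oplus\mathcal{H}^{\Phi}$ is orthogonal. The main obstacle is the orthogonality step: one must arrange the expansion of $h_{p,q,\alpha}(H_X,\,\cdot\,)$ so that the identity $\skal{Z_2}{\xi}=\omega_q(\xi)\skal{X}{\xi}$ clears every $\xi$-dependent cross term, leaving only the metric-compatibility expression $\xi(\skal{X}{\eta})=0$; correctly tracking which covariant derivatives stay in $\mathcal{H}^{\varphi}$ versus $\mathcal{V}^{\varphi}$ during the dimension count is the other delicate point.
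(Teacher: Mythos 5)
Your proposal is correct and follows essentially the same route as the paper: identify the generators of $\mathcal{V}^{\Phi}$ via Lemma \ref{l1} and $B(\eta,\xi)=-\varphi_{\ast}(\nabla^M_{\xi}\eta)$, count dimensions using sections vanishing at $x$, verify $h_{p,q,\alpha}$-orthogonality of the candidate horizontal generators (your identity $\skal{Z_2}{\xi}=\omega_q(\xi)\skal{X}{\xi}$ is just the paper's cancellation $1-\omega_q(\xi)-q\omega_q(\xi)|\xi|^2_M=0$ in another guise, combined with the implicit use of $\xi\skal{X}{\eta}_M=0$), and conclude by a dimension count. Your write-up is, if anything, slightly more explicit than the paper's (e.g.\ the injectivity of $X\mapsto Z_2$ and the correct dimension $2\dim N$ for $\mathbb{H}$, where the paper has a typo).
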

\begin{proof} Let $\varphi:(M,g_M)\to (N,g_N)$ be a submersion. Fix $\xi\in T_xM$, $x\in M$. Denote the right hand side of \eqref{tPhie1} by $\mathbb{V}$ and the right hand side of \eqref{tPhie2} by $\mathbb{H}$. By Lemma \ref{l1}, for $\eta\in\Gamma(\mathcal{V}^{\varphi})$ 
\[
\Phi_{\ast}(\eta^h_{\xi}+(\nabla^M_{\xi}\eta)^v_{\xi})=(\varphi_{\ast}\eta)^h_{\varphi_{\ast}\xi}+B(\eta,\xi)^v_{\varphi_{\ast}\xi}+(\varphi_{\ast}\nabla^M_{\xi}\eta)^v_{\varphi_{\ast}\xi}=0.
\]
Hence, $\mathbb{V}\subset \mathcal{V}^{\Phi}_{\xi}$. Let $f$ be a smooth function such that $\xi f=1$ and $f(x)=0$. Then
\[
\mathcal{V}^{\Phi}_{\xi}\ni (f\eta)^h_{\xi}+(\nabla^M_{\xi}f\eta)^v_{\xi}=\eta^v_{\xi}.
\]
Therefore, we see that $\dim \mathbb{V}=2(m-n)$, where $m=\dim M$ and $n=\dim N$. Thus $\mathcal{V}^{\Phi}_{\xi}=\mathbb{V}$, so \eqref{tPhie1} holds. Now, let $\eta\in\Gamma(\mathcal{V}^{\varphi})$ and $X\in\Gamma(\mathcal{H}^{\varphi})$. For $A=\eta^h_{\xi}+(\nabla^M_{\xi}\eta)^v_{\xi}$ and $B=X^v_{\xi}-q\omega_q(\xi)\skal{X}{\xi}\xi^v_{\xi}+\omega_{\alpha}(\xi)^p(\nabla^M_{\xi}X)^h_{\xi}$
\begin{gather*}
\begin{split}
\skal{A}{B}_{TM} &=\omega_{\alpha}(\xi)^p\bigg( \skal{X}{\nabla^M_{\xi}\eta}_M+q\skal{X}{\xi}_M\skal{\xi}{\nabla^M_{\xi}\eta}_M \\
&-q\omega_q(\xi)\skal{X}{\xi}_M\skal{\nabla^M_{\xi}\eta}{\xi}_M \\
&-q^2\omega_q(\xi)\skal{X}{\xi}_M\skal{\nabla^M_{\xi}\eta}{\xi}_M|\xi|^2_M+\skal{\nabla^M_{\xi}X}{\eta}_M \bigg) \\
&=q\omega_{\alpha}(\xi)^p\skal{X}{\xi}_M\skal{\nabla^M_{\xi}\eta}{\xi}_M(1-\omega_q(\xi)-q\omega_q(\xi)|\xi|^2_M) \\
&=0.
\end{split}
\end{gather*}
Thus $\mathbb{H}$ is orthogonal to $\mathcal{V}^{\Phi}_{\xi}$. Again, as above, taking a function $f$ such that $\xi f=1$ and $f(x)=0$, we get $X^h_{\xi}\in\mathbb{H}$ for $X\in\Gamma(\mathcal{H}^{\varphi})$. Hence, $\dim \mathbb{H}=2m$. Therefore, $\mathbb{H}=\mathcal{H}^{\Phi}_{\xi}$ and \eqref{tPhie2} holds. 
\end{proof}

The proof of Theorem \ref{tPhi} also implies
\begin{corollary} \label{cPhi}
Assume $\varphi:M\to N$ is a submersion. Let $\Phi=\varphi_{\ast}:TM\to TN$ and equip tangent bundles $TM$ and $TN$ with Cheeger--Gromoll type metrics $h_{p,q,\alpha}$ and $h_{r,s,\beta}$, respectively. Then
\begin{align}
\eta^v_{\xi} &\in\mathcal{V}^{\Phi}_{\xi} \quad\textrm{for $\eta\in\mathcal{V}^{\varphi}$}, \label{cPhie1} \\
X^h_{\xi} &\in\mathcal{H}^{\Phi}_{\xi} \quad\textrm{for $X\in\mathcal{H}^{\varphi}$}. \label{cPhie2}
\end{align}
\end{corollary}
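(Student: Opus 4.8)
The plan is to read off both containments directly from the computation in the proof of Theorem \ref{tPhi}, where each was obtained as an intermediate step through a suitable choice of auxiliary function; no new idea is needed.

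First I would prove \eqref{cPhie1}. Fix $\eta\in\mathcal{V}^{\varphi}_x$ and extend it to a local section of $\mathcal{V}^{\varphi}$, and choose, exactly as in the proof of the theorem, a smooth function $f$ with $f(x)=0$ and $\xi f=1$. Since $\mathcal{V}^{\varphi}$ is a vector subbundle of $TM$, the product $f\eta$ is again a section of $\mathcal{V}^{\varphi}$, so by \eqref{tPhie1} the vector $(f\eta)^h_{\xi}+(\nabla^M_{\xi}(f\eta))^v_{\xi}$ belongs to $\mathcal{V}^{\Phi}_{\xi}$. Evaluating at $x$, the vanishing $(f\eta)(x)=0$ annihilates the horizontal summand, while the Leibniz rule gives $\nabla^M_{\xi}(f\eta)=(\xi f)\,\eta=\eta$; hence this vector equals $\eta^v_{\xi}$, which proves \eqref{cPhie1}.

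For \eqref{cPhie2} I would repeat the argument verbatim with $X\in\mathcal{H}^{\varphi}_x$ extended to a section of $\mathcal{H}^{\varphi}$ and with $fX$ in place of $f\eta$. In the generator of $\mathcal{H}^{\Phi}_{\xi}$ supplied by \eqref{tPhie2}, the condition $(fX)(x)=0$ kills both the term $(fX)^v_{\xi}$ and the term carrying the factor $\skal{fX}{\xi}$, while $\nabla^M_{\xi}(fX)=X$; what remains is $\omega_{\alpha}(\xi)^p X^h_{\xi}$. As $\omega_{\alpha}(\xi)$ is positive, the scalar $\omega_{\alpha}(\xi)^p$ is nonzero, and dividing by it yields $X^h_{\xi}\in\mathcal{H}^{\Phi}_{\xi}$.

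The argument is essentially contained in the proof of Theorem \ref{tPhi}, so I do not expect any genuine obstacle; the only point needing a word is that multiplication by the scalar function $f$ keeps a section inside the distribution $\mathcal{V}^{\varphi}$ (respectively $\mathcal{H}^{\varphi}$), which holds because these are vector subbundles of $TM$.
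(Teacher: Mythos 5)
Your argument is correct and is exactly the one the paper uses: the corollary is stated as a byproduct of the proof of Theorem \ref{tPhi}, where the same auxiliary function $f$ with $f(x)=0$, $\xi f=1$ is applied to $f\eta$ and $fX$ to isolate $\eta^v_{\xi}$ and $\omega_{\alpha}(\xi)^p X^h_{\xi}$ from the generators in \eqref{tPhie1} and \eqref{tPhie2}. Your added remarks (that $f\eta$, $fX$ stay in the respective subbundles and that $\omega_{\alpha}(\xi)^p\neq 0$) are exactly the small justifications the paper leaves implicit.
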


Let $\varphi:M\to N,$ be a Riemannian submersion, $\Phi=\varphi_{\ast}:TM\to TN$. A vector field $\hat Z\in \Gamma(\mathcal{H}^{\varphi})$ is called {\it basic} if there is a vector field $Z\in \Gamma(TN)$ such that $\varphi_{\ast}\hat Z=Z\circ\varphi$. The correspondence $Z\mapsto \hat Z$ is one--to--one, since $\varphi_{\ast x}:\mathcal{H}^{\varphi}_x\to T_{\varphi(x)}N$ is an isomorphisms. Let $T:\Gamma(\mathcal{H}^{\varphi})\times\Gamma(\mathcal{H}^{\varphi})\to\Gamma(\mathcal{V}^{\varphi})$ be an itegrability tensor of $\varphi$,
\[
T(X,Y)=\frac{1}{2}[X,Y]^{\top}.
\]
 Let $\nabla^M$ and $\nabla^N$ be the Levi--Civita conections of $\skal{\cdot}{\cdot}_M$ and $\skal{\cdot}{\cdot}_N$, respectively. Then one can prove that for basic vector fields $\hat Z, \hat W$ and vertical vector field $\xi$ 
\begin{equation} \label{e12}
-\skal{(\nabla^M_{\xi}\hat Z)^{\bot}}{\hat W}_M=\skal{\xi}{T(\hat Z,\hat W)}_M.
\end{equation}
For more details on Riemannian submersions see \cite[Chapter 9]{bes}.

Assume now, $\varphi:(M,\skal{\cdot}{\cdot}_M)\to (N,\skal{\cdot}{\cdot}_N)$ is a horizontally conformal map with a dilatation $\lambda$. If we put $\skal{\cdot}{\cdot}_{\lambda}=\lambda\skal{\cdot}{\cdot}_M$, then $\varphi:(M,\skal{\cdot}{\cdot}_{\lambda})\to (N,\skal{\cdot}{\cdot}_N)$ is a Riemannian submersion. Clearly, horizontal and vertical distrbutions with respect to $\skal{\cdot}{\cdot}_M$ and $\skal{\cdot}{\cdot}_{\lambda}$ coincide. The Levi--Civita connections $\nabla^M$ and $\nabla^{\lambda}$ of $\skal{\cdot}{\cdot}_M$ and $\skal{\cdot}{\cdot}_{\lambda}$ satisfy 
\begin{equation} \label{confd}
\nabla^{\lambda}_XY=\nabla^M_XY+S(X,Y),
\end{equation}
where $S$ is a symmetric tensor field given by
\[
S(X,Y)=\frac{1}{2\lambda}((X\lambda)Y+(Y\lambda)X-\skal{X}{Y}_M{\rm grad\,\lambda}).
\]
Let $\Phi=\varphi_{\ast}:TM\to TN$ and equip $TM$ and $TN$ with Cheeger--Gromoll type metrics $h_{p,q}$ and $h_{r,s}$, respectively. For simplicity, put
\[
P(X,\xi)=\sum_{i=1}^n\skal{\xi}{T(X,e_i)}_M\varphi_{\ast}e_i,\quad \xi\in T_xM, X\in\mathcal{H}^{\varphi}_x,
\]
where $e_1,\ldots,e_n$ is an orthonormal basis of $\mathcal{H}^{\varphi}_x$. 
\begin{lemma} \label{lmain}
Let $X\in\mathcal{H}^{\varphi}_x$, $\xi\in T_xM$, $x\in M$. Then
\begin{equation} \label{emain}
\Phi_{\ast}(X^h_{\xi})=(\varphi_{\ast}X)^h_{\varphi_{\ast}\xi}+(\varphi_{\ast}S(X,\xi))^v_{\varphi_{\ast}\xi}+P(X,\xi)^v_{\varphi_{\ast}\xi}.
\end{equation}
\end{lemma}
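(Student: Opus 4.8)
The plan is to read off the formula from Lemma~\ref{l1} and then reduce everything to an identity ``downstairs'' on $N$. Since $\Phi_{\ast}=\varphi_{\ast\ast}$, equation~\eqref{l1e2} applied to $X^h_{\xi}$ gives at once
\[
\Phi_{\ast}(X^h_{\xi})=(\varphi_{\ast}X)^h_{\varphi_{\ast}\xi}+(B(X,\xi))^v_{\varphi_{\ast}\xi},
\]
where $B$ is the second fundamental form of $\varphi:(M,\skal{\cdot}{\cdot}_M)\to(N,\skal{\cdot}{\cdot}_N)$ from \eqref{e2} (the horizontal/vertical splitting in Lemma~\ref{l1} is the one determined by $\nabla^M$ and $\nabla^N$). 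Because the vertical lift is linear in its argument, the asserted formula \eqref{emain} is \emph{equivalent} to the single identity in $T_{\varphi(x)}N$
\[
B(X,\xi)=\varphi_{\ast}S(X,\xi)+P(X,\xi),\qquad X\in\mathcal{H}^{\varphi}_x,\ \xi\in T_xM.
\]
From here on I would forget the tangent bundles entirely and prove this identity.

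Next I would exploit the rescaled metric $\skal{\cdot}{\cdot}_{\lambda}$. Let $B^{\lambda}$ denote the second fundamental form of the \emph{Riemannian} submersion $\varphi:(M,\skal{\cdot}{\cdot}_{\lambda})\to(N,\skal{\cdot}{\cdot}_N)$. The pull-back connection $\nabla^{\varphi}$ depends only on $\nabla^N$, which is unchanged, so subtracting the two instances of \eqref{e2} and inserting \eqref{confd} yields
\[
B(X,\xi)-B^{\lambda}(X,\xi)=\varphi_{\ast}\big(\nabla^{\lambda}_X\xi-\nabla^M_X\xi\big)=\varphi_{\ast}S(X,\xi).
\]
Thus the identity to be proved collapses to $B^{\lambda}(X,\xi)=P(X,\xi)$, i.e. to a completely standard computation of the second fundamental form of the Riemannian submersion $\varphi$ with respect to $\skal{\cdot}{\cdot}_{\lambda}$.

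For this step I would use tensoriality of $B^{\lambda}$ and split $\xi=\xi^{\top}+\xi^{\bot}$. On horizontal arguments $B^{\lambda}$ vanishes: extending $X$ and $\xi^{\bot}$ to basic fields, the Riemannian-submersion identity $\varphi_{\ast}\big((\nabla^{\lambda}_X\xi^{\bot})^{\bot}\big)=\nabla^{\varphi}_X\varphi_{\ast}\xi^{\bot}$ gives $B^{\lambda}(X,\xi^{\bot})=0$. For the vertical part, $\varphi_{\ast}$ kills $\varphi_{\ast}\xi^{\top}=0$, hence $B^{\lambda}(X,\xi^{\top})=-\varphi_{\ast}\big((\nabla^{\lambda}_X\xi^{\top})^{\bot}\big)$. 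Expanding $(\nabla^{\lambda}_X\xi^{\top})^{\bot}$ in a $\skal{\cdot}{\cdot}_{\lambda}$-orthonormal horizontal frame $f_1,\dots,f_n$, computing each coefficient by differentiating $\skal{\xi^{\top}}{f_j}_{\lambda}=0$, and invoking \eqref{e12} for the Riemannian submersion with respect to $\skal{\cdot}{\cdot}_{\lambda}$, I would arrive at
\[
B^{\lambda}(X,\xi)=\sum_{j=1}^n\skal{\xi}{T(X,f_j)}_{\lambda}\,\varphi_{\ast}f_j.
\]

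Finally I would convert this to the $\skal{\cdot}{\cdot}_M$-orthonormal frame $e_j=\sqrt{\lambda}\,f_j$ appearing in the definition of $P$. The only delicate point — and the step I expect to be the main bookkeeping obstacle — is keeping track of the two metrics simultaneously: $T$ is metric independent and linear, $\skal{\cdot}{\cdot}_{\lambda}=\lambda\skal{\cdot}{\cdot}_M$ contributes one factor $\lambda$, while the two frame changes $f_j=\lambda^{-1/2}e_j$ contribute a factor $\lambda^{-1}$, so all powers of $\lambda$ cancel and the sum becomes exactly $P(X,\xi)=\sum_i\skal{\xi}{T(X,e_i)}_M\varphi_{\ast}e_i$. (This also confirms that $P$ is independent of the chosen orthonormal frame.) Combining $B^{\lambda}(X,\xi)=P(X,\xi)$ with the reduction above establishes \eqref{emain}; apart from the scaling bookkeeping and the careful reinterpretation of \eqref{e12} in the metric $\skal{\cdot}{\cdot}_{\lambda}$, no genuine difficulty is expected.
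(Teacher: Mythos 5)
Your argument is correct, and it reaches \eqref{emain} by a genuinely different route than the paper. The paper does not invoke Lemma \ref{l1} here at all: it computes $\pi_{\ast}(\Phi_{\ast}X^h_{\xi})$ and $K(\Phi_{\ast}X^h_{\xi})$ from scratch, by choosing a horizontal curve $\gamma$ through $x$ with $\dot\gamma(0)=X$, taking its horizontal lift $\eta=\gamma^h$ (so $\eta$ is $\nabla^M$-parallel along $\gamma$ with $\eta(0)=\xi$), and evaluating $\nabla^N_{\varphi_{\ast}\dot\gamma}\varphi_{\ast}\eta=\varphi_{\ast}(\nabla^{\lambda}_{\dot\gamma}\eta^{\bot})$ via the splitting $\nabla^{\lambda}_{\dot\gamma}\eta^{\bot}=S(\dot\gamma,\eta)-\nabla^{\lambda}_{\dot\gamma}\eta^{\top}$ and \eqref{e12}. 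You instead quote \eqref{l1e2} to reduce everything to the tensorial identity $B(X,\xi)=\varphi_{\ast}S(X,\xi)+P(X,\xi)$ on $N$ --- which is exactly \eqref{cme1}, so you have inverted the logical order of the paper, proving Corollary \ref{clmain}(1) first and deducing the lemma from it (no circularity, since your proof of that identity is independent) --- and you establish it by comparing $B$ with the second fundamental form $B^{\lambda}$ of the Riemannian submersion $\varphi:(M,\skal{\cdot}{\cdot}_{\lambda})\to(N,\skal{\cdot}{\cdot}_N)$, using that $\nabla^{\varphi}$ is metric-independent on $M$ so that $B-B^{\lambda}=\varphi_{\ast}\circ S$ by \eqref{confd}. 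The underlying computation is ultimately the same (both proofs hinge on \eqref{confd} and on \eqref{e12} applied in the $\lambda$-metric, where the horizontal distribution and $T$ are unchanged by conformality), but your packaging is cleaner in that it is purely tensorial, avoids the choice of curve and parallel field, and makes transparent why the three terms of \eqref{emain} appear: horizontal lift from $\pi_{\ast}$, the $\varphi_{\ast}S$ term from the conformal change, and $P$ as the second fundamental form of the genuine Riemannian submersion. The price is the frame-rescaling bookkeeping $f_j=\lambda^{-1/2}e_j$ at the end, which you carry out correctly (the powers of $\lambda$ do cancel, and $B^{\lambda}(X,\xi^{\bot})=0$ and $B^{\lambda}(X,\xi^{\top})=-\varphi_{\ast}\bigl((\nabla^{\lambda}_{X}\xi^{\top})^{\bot}\bigr)$ are both standard for Riemannian submersions), whereas the paper absorbs all factors of $\lambda$ into the single identity for $\varphi_{\ast}(\nabla^{\lambda}_{\dot\gamma}\eta^{\top})$.
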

\begin{proof}
We may assume that $X^h_{\hat\zeta}=(\gamma^h)^{\cdot}(0)$, where $\gamma$ is a curve on $M$ such that $\gamma(0)=x$, $\dot{\gamma}(t)\in\mathcal{H}^{\varphi}$ and $\dot{\gamma}(0)=X$, and $\gamma^h$ is a horizontal lift of $\gamma$ to $TM$ such that $\gamma^h(0)=\xi$. Put $\eta=\gamma^h$ for simplicity. Then
\[
\nabla^N_{\varphi_{\ast}\dot{\gamma}}\varphi_{\ast}\eta=\nabla^N_{\varphi_{\ast}\dot{\gamma}}\varphi_{\ast}\eta^{\bot}=\varphi_{\ast}(\nabla^{\lambda}_{\dot{\gamma}}\eta^{\bot})
\]
and by \eqref{confd} and the fact that $\eta$ is parallel with respect to $\nabla^M$,
\[
\nabla^{\lambda}_{\dot{\gamma}}\eta=S(\dot{\gamma},\eta).
\]
Therefore,
\[
\nabla^{\lambda}_{\dot{\gamma}}\eta^{\bot}=S(\dot{\gamma},\eta)-\nabla^{\lambda}_{\dot{\gamma}}\eta^{\top}.
\]
Next, for a vector field $Y$ on $N$, by \eqref{e12}
\[
\skal{\varphi_{\ast}(\nabla^{\lambda}_{\dot{\gamma}}\eta^{\top})}{Y}_N=\lambda\skal{\nabla^{\lambda}_{\dot{\gamma}}\eta^{\top}}{\hat Y}_M =\lambda\skal{\nabla^{\lambda}_{\eta^{\top}}\dot{\gamma}}{\hat Y}_M=-\lambda\skal{\eta^{\top}}{T(\dot{\gamma},\hat Y)}_M.
\]
Thus
\[
\varphi_{\ast}(\nabla^{\lambda}_{\dot{\gamma}}\eta^{\top})=\frac{1}{\lambda}\sum_i \skal{\varphi_{\ast}(\nabla^{\lambda}_{\dot{\gamma}}\eta^{\top})}{\varphi_{\ast}e_i}_N\varphi_{\ast}e_i=-\lambda\sum_i \skal{\eta^{\top}}{T(\dot{\gamma},e_i)}_M\varphi_{\ast}e_i.
\]
Since
\begin{gather*}
\begin{split}
K(\Phi_{\ast}X^h_{\xi}) &=\nabla^N_{\varphi_{\ast}X}\varphi_{\ast}\eta \\
&=\varphi_{\ast}(\nabla^{\lambda}_{\dot{\gamma}}\eta^{\bot}) \\
&=\varphi_{\ast}S(X,\eta)-\varphi_{\ast}(\nabla^{\lambda}_{\dot{\gamma}}\eta^{\top}) \\
&=\varphi_{\ast}S(X,\eta)+\sum_i \skal{\eta^{\top}}{T(X,e_i)}_M\varphi_{\ast}e_i \\
&=\varphi_{\ast}S(X,\xi)+\sum_i \skal{\xi}{T(X,e_i)}_M\varphi_{\ast}e_i
\end{split}
\end{gather*}
and
\[
\pi_{\ast}(\Phi_{\ast}X^h_{\xi})=\varphi_{\ast}(\pi_{\ast}X^h_{\xi})=\varphi_{\ast}X,
\]
the equality \eqref{emain} holds.
\end{proof}

By Lemma \ref{lmain} we have (see also \cite[Lemma 4.5.1]{bw}).
\begin{corollary} \label{clmain}
\begin{enumerate}
\item[$(1)$] Let $X,Y\in\mathcal{H}^{\varphi}_x$, $\xi\in T_xM$. Then
\begin{align}
B(X,\xi) &=\varphi_{\ast}(S(X,\xi))+P(X,\xi), \label{cme1} \\
\skal{B(X,\xi)}{\varphi_{\ast}Y}_N &=\lambda(\skal{S(X,\xi)}{Y}_M+\skal{\xi}{T(X,Y)}_M). \label{cme2}
\end{align}
\item[$(2)$] Let $X\in\mathcal{H}^{\varphi}_x$, $\xi\in \mathcal{V}^{\varphi}_x$. Then
\begin{align} 
\skal{\varphi_{\ast}(S(X,\xi))}{P(X,\xi)}_N &=0, \label{cme3} \\
|B(X,\xi)^v_0|^2_{TN} &=|(\varphi_{\ast}(S(X,\xi)))^v_0|^2_{TN}+|P(X,\xi)^v_0|^2_{TN}. \label{cme4}
\end{align}
\item[$(3)$] $B(X,\xi)=0$ for all $X\in\Gamma(\mathcal{H}^{\varphi})$, $\xi\in\Gamma(TM)$ if and only if $T=0$ and $S=0$.
\end{enumerate}
\end{corollary}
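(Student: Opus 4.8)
The plan is to derive the whole corollary from the single identity \eqref{cme1}, which is itself essentially free. Since $\Phi=\varphi_{\ast}$ and $\Phi_{\ast}=\varphi_{\ast\ast}$, formula \eqref{l1e2} of Lemma \ref{l1} reads $\Phi_{\ast}X^h_{\xi}=(\varphi_{\ast}X)^h_{\varphi_{\ast}\xi}+(B(X,\xi))^v_{\varphi_{\ast}\xi}$, whereas Lemma \ref{lmain} gives $\Phi_{\ast}X^h_{\xi}=(\varphi_{\ast}X)^h_{\varphi_{\ast}\xi}+(\varphi_{\ast}S(X,\xi)+P(X,\xi))^v_{\varphi_{\ast}\xi}$. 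Comparing the vertical components and using that the vertical lift $W\mapsto W^v_{\varphi_{\ast}\xi}$ is a linear isomorphism onto $\mathcal{V}_{\varphi_{\ast}\xi}$ yields $B(X,\xi)=\varphi_{\ast}(S(X,\xi))+P(X,\xi)$, i.e.\ \eqref{cme1}. Everything else is built on this.

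To get \eqref{cme2} I would substitute \eqref{cme1} into $\langle B(X,\xi),\varphi_{\ast}Y\rangle_N$ and treat the two summands by horizontal conformality. For the first, $\varphi_{\ast}(S(X,\xi))=\varphi_{\ast}((S(X,\xi))^{\bot})$ and $\langle\varphi_{\ast}A,\varphi_{\ast}Y\rangle_N=\lambda\langle A,Y\rangle_M$ give $\lambda\langle S(X,\xi),Y\rangle_M$, the vertical part of $S(X,\xi)$ dropping out against the horizontal $Y$. For the second, expanding $Y=\sum_i\langle e_i,Y\rangle_M e_i$ and again using $\langle\varphi_{\ast}e_i,\varphi_{\ast}Y\rangle_N=\lambda\langle e_i,Y\rangle_M$ turns $\langle P(X,\xi),\varphi_{\ast}Y\rangle_N$ into $\lambda\langle\xi,T(X,Y)\rangle_M$, which is \eqref{cme2}.

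Part (2) is where $\xi$ is taken vertical. Then $\langle X,\xi\rangle_M=0$ and $\varphi_{\ast}\xi=0$, so $\varphi_{\ast}(S(X,\xi))=\tfrac{1}{2\lambda}(\xi\lambda)\varphi_{\ast}X$ is a multiple of $\varphi_{\ast}X$; hence $\langle\varphi_{\ast}(S(X,\xi)),P(X,\xi)\rangle_N$ is proportional to $\langle\varphi_{\ast}X,P(X,\xi)\rangle_N=\lambda\langle\xi,T(X,X)\rangle_M$, which vanishes by the antisymmetry $T(X,X)=0$ of the integrability tensor, proving \eqref{cme3}. Then \eqref{cme4} is the Pythagorean identity applied to the orthogonal splitting \eqref{cme1}, after observing that at the zero vector the vertical lift is an isometry: since $\omega_{\beta}(0)=1$ and the $s$--term carries a factor $g_N(\cdot,0)=0$, the metric $h_{r,s,\beta}$ restricted to the fibre over $0$ coincides with $g_N$, so $|W^v_0|^2_{TN}=|W|^2_N$.

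For part (3) the ``if'' direction is immediate: $S=0$ kills $\varphi_{\ast}S$ and $T=0$ kills $P$, so $B=0$ by \eqref{cme1}. For ``only if'' I would exploit that for vertical $\xi$ the two pieces of $B$ are orthogonal, so \eqref{cme4} forces both $\varphi_{\ast}(S(X,\xi))=0$ and $P(X,\xi)=0$. From $P(X,\xi)=\sum_i\langle\xi,T(X,e_i)\rangle_M\varphi_{\ast}e_i=0$, the independence of the $\varphi_{\ast}e_i$, the inclusion $T(X,e_i)\in\mathcal{V}^{\varphi}$, and $\xi$ ranging over all vertical vectors, I get $T(X,e_i)=0$, hence $T=0$; then $P\equiv0$ and \eqref{cme1} gives $\varphi_{\ast}(S(X,\xi))=B(X,\xi)=0$ for all horizontal $X$ and all $\xi$. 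Testing $\xi=X$ yields $\varphi_{\ast}(\mathrm{grad}\,\lambda)=2(X\lambda)\varphi_{\ast}X$, and injectivity of $\varphi_{\ast}$ on $\mathcal{H}^{\varphi}$ forces the horizontal part of $\mathrm{grad}\,\lambda$ to vanish and $X\lambda=0$ for horizontal $X$; testing vertical $\xi$ then gives $\xi\lambda=0$, so $\mathrm{grad}\,\lambda=0$ and $S=0$. The main obstacle is exactly this bootstrap: one must first use the vertical--$\xi$ orthogonality to separate $\varphi_{\ast}S$ from $P$ and conclude $T=0$, and only then may the full relation $\varphi_{\ast}(S(X,\xi))=0$ be used to pin down $\mathrm{grad}\,\lambda$; a little care is also needed when $\dim N=1$, where the projection onto the one--dimensional horizontal space makes $\varphi_{\ast}(\mathrm{grad}\,\lambda)=2(X\lambda)\varphi_{\ast}X$ collapse to $\langle\mathrm{grad}\,\lambda,X\rangle_M=0$ as well.
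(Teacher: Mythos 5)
Your proposal is correct and follows essentially the same route as the paper: \eqref{cme1} by comparing Lemma \ref{l1} with Lemma \ref{lmain}, \eqref{cme2} by expansion and horizontal conformality, \eqref{cme3} from the skew-symmetry $T(X,X)=0$, and \eqref{cme4} as the resulting Pythagorean identity. Your part (3), including the bootstrap $T=0\Rightarrow\varphi_{\ast}S=0\Rightarrow{\rm grad}\,\lambda=0$, correctly fills in details the paper dismisses with ``a consequence of \eqref{cme1} and \eqref{cme3}.''
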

\begin{proof}
\eqref{cme1} follows by \eqref{emain} and \eqref{l1e2}. For $X,Y\in\mathcal{H}^{\varphi}_x$ and $\xi\in T_xM$ we have
\begin{gather*}
\begin{split}
\skal{B(X,\xi)}{\varphi_{\ast}Y}_N &=\skal{\varphi_{\ast}(S(X,\xi))}{\varphi_{\ast}Y}_N+\sum_i\skal{\xi}{T(X,e_i)}_M\skal{\varphi_{\ast}e_i}{\varphi_{\ast} Y}_N \\
&=\lambda\skal{S(X,\xi)}{Y}_M+\lambda\skal{\xi}{T(X,Y)}_M,
\end{split}
\end{gather*}
which proves \eqref{cme2}. Now, let $X\in\mathcal{H}^{\varphi}_x$ and $\xi\in\mathcal{V}^{\varphi}_x$. Then
\[
\varphi_{\ast}(S(X,\xi))=\frac{1}{2\lambda}(\xi \lambda)\varphi_{\ast}X.
\]
Thus
\begin{gather*}
\begin{split}
\skal{\varphi_{\ast}(S(X,\xi))}{P(X,\xi)}_N &=\frac{1}{2\lambda}(\xi\lambda)\sum_i\skal{\xi}{T(X,e_i)}_M \skal{\varphi_{\ast}X}{\varphi_{\ast}e_i}_N \\
&=\frac{1}{2}\skal{\xi}{T(X,X)}_M=0,
\end{split}
\end{gather*}
since $T$ is skew--symmetric. Hence \eqref{cme3} holds, which implies \eqref{cme4}. $(3)$ is a consequence of \eqref{cme1} and \eqref{cme3}.
\end{proof}

Let $X\in\mathcal{H}^{\varphi}_x$ and $\xi\in T_xM$. Decompose $S(X,\xi)^v_{\xi}$ into horizontal and vertical part with respect to $\mathcal{V}^{\Phi}$ and $\mathcal{H}^{\Phi}$,
\[
S(X,\xi)^v_{\xi}=S^{\top}_{\Phi}(X,\xi)+S^{\bot}_{\Phi}(X,\xi).
\]
Equip tangent bundles $TM$ and $TN$ with Cheeger--Gromoll type metrics $h_{p,q,\alpha}$ and $h_{r,s,\beta}$, respectively. 
\begin{lemma} \label{l5}
Assume $\varphi$ and $\Phi$ are both horizontally conformal with dilatations $\lambda$ and $\Lambda$, respectively. Let $X\in\mathcal{H}^{\varphi}_x$, $\xi\in T_xM$. Then
\begin{align}
\Lambda(\xi)|X|_M^2 &=\lambda(x)|X|_M^2+|B(X,\xi)^v_{\varphi_{\ast}\xi}|_{TN}^2, \label{e13} \\
\Lambda(\xi)|X|_M^2+\Lambda(\xi)|S^{\bot}_{\Phi}(X,\xi)|_{TN}^2 &=\lambda(x)|X|_M^2+|P(X,\xi)^v_{\varphi_{\ast}\xi}|_{TN}^2. \label{e14}
\end{align} 
\end{lemma}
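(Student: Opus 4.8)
The plan is to derive both identities by feeding suitable vectors of $\mathcal{H}^{\Phi}_{\xi}$ into the defining relation of horizontal conformality of $\Phi$, namely $|\Phi_{\ast}A|^2_{TN}=\Lambda(\xi)|A|^2_{TM}$ for $A\in\mathcal{H}^{\Phi}_{\xi}$, and then to evaluate each side explicitly. The two computational facts I will use repeatedly are: first, that for any Cheeger--Gromoll type metric the natural horizontal and vertical subspaces of the second tangent bundle are orthogonal, a horizontal lift satisfies $|Z^h_{\zeta}|^2=|Z|^2$, and horizontal and vertical lifts are mutually orthogonal; second, the image formula $\Phi_{\ast}(X^h_{\xi})=(\varphi_{\ast}X)^h_{\varphi_{\ast}\xi}+B(X,\xi)^v_{\varphi_{\ast}\xi}$ from \eqref{l1e2}, together with the splitting $B(X,\xi)=\varphi_{\ast}(S(X,\xi))+P(X,\xi)$ from \eqref{cme1}.

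For \eqref{e13} I would take $A=X^h_{\xi}$, which lies in $\mathcal{H}^{\Phi}_{\xi}$ by Corollary \ref{cPhi}. On one side $|X^h_{\xi}|^2_{TM}=|X|^2_M$, since the lift is purely horizontal, so the conformality factor produces $\Lambda(\xi)|X|^2_M$. On the other side, using \eqref{l1e2} and the orthogonality of horizontal and vertical parts in $(TN,h_{r,s,\beta})$, the norm of $\Phi_{\ast}X^h_{\xi}$ splits as $\skal{\varphi_{\ast}X}{\varphi_{\ast}X}_N+|B(X,\xi)^v_{\varphi_{\ast}\xi}|^2_{TN}$; since $\varphi$ is horizontally conformal with dilatation $\lambda$ and $X\in\mathcal{H}^{\varphi}_x$, the first term equals $\lambda(x)|X|^2_M$, which gives \eqref{e13}.

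For \eqref{e14} the key is the corrected choice $A=X^h_{\xi}-S^{\bot}_{\Phi}(X,\xi)$, which again lies in $\mathcal{H}^{\Phi}_{\xi}$. Since $S^{\top}_{\Phi}(X,\xi)\in\mathcal{V}^{\Phi}_{\xi}=\ker\Phi_{\ast}$, applying $\Phi_{\ast}$ to the decomposition $S(X,\xi)^v_{\xi}=S^{\top}_{\Phi}+S^{\bot}_{\Phi}$ and using \eqref{l1e1} yields $\Phi_{\ast}S^{\bot}_{\Phi}(X,\xi)=(\varphi_{\ast}S(X,\xi))^v_{\varphi_{\ast}\xi}$. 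Hence, by \eqref{l1e2} and \eqref{cme1}, $\Phi_{\ast}A=(\varphi_{\ast}X)^h_{\varphi_{\ast}\xi}+P(X,\xi)^v_{\varphi_{\ast}\xi}$, whose $TN$-norm is $\lambda(x)|X|^2_M+|P(X,\xi)^v_{\varphi_{\ast}\xi}|^2_{TN}$, computed exactly as in the previous paragraph. For the other side I must expand $|A|^2_{TM}=|X^h_{\xi}|^2_{TM}-2\skal{X^h_{\xi}}{S^{\bot}_{\Phi}(X,\xi)}_{TM}+|S^{\bot}_{\Phi}(X,\xi)|^2_{TM}$; here the cross term equals $\skal{X^h_{\xi}}{S(X,\xi)^v_{\xi}}_{TM}$, because $X^h_{\xi}\in\mathcal{H}^{\Phi}_{\xi}$ is orthogonal to $S^{\top}_{\Phi}\in\mathcal{V}^{\Phi}_{\xi}$, and this in turn vanishes since $X^h_{\xi}$ is horizontal and $S(X,\xi)^v_{\xi}$ is vertical in the natural splitting of $TTM$. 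Thus $|A|^2_{TM}=|X|^2_M+|S^{\bot}_{\Phi}(X,\xi)|^2_{TM}$, and multiplying by $\Lambda(\xi)$ and equating with the computed image norm gives \eqref{e14}.

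The only genuine obstacle is spotting the corrective vector $X^h_{\xi}-S^{\bot}_{\Phi}(X,\xi)$ in \eqref{e14}: it is engineered so that $\Phi_{\ast}$ annihilates the $\varphi_{\ast}S$ contribution to $B$ and isolates $P$, while the two orthogonality reductions (first discarding $S^{\top}_{\Phi}$ against $\mathcal{H}^{\Phi}_{\xi}$, then invoking horizontal--vertical orthogonality of $h_{p,q,\alpha}$) keep the $TM$-norm diagonal. Everything else is routine bookkeeping with the explicit metric; I also remark that the norm of $S^{\bot}_{\Phi}(X,\xi)$ on the left of \eqref{e14} is intrinsically a $TM$-norm, so the subscript there should read $TM$ rather than $TN$.
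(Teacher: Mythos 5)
Your proof is correct and follows essentially the same route as the paper: \eqref{e13} from applying horizontal conformality to $X^h_{\xi}$ together with \eqref{l1e2}, and \eqref{e14} from the test vector $X^h_{\xi}-S^{\bot}_{\Phi}(X,\xi)\in\mathcal{H}^{\Phi}_{\xi}$, whose image under $\Phi_{\ast}$ isolates $P(X,\xi)$ and whose $TM$-norm diagonalizes by the same two orthogonality observations the authors use. Your closing remark about the subscript is also right: $S^{\bot}_{\Phi}(X,\xi)$ lives in $T_{\xi}TM$, so its norm in \eqref{e14} should carry the subscript $TM$, a typo in the paper's statement.
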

\begin{proof}
\eqref{e13} follows directly by \eqref{l1e2} (and Corollary \ref{cPhi}). To prove \eqref{e14}, define $V=X^h_{\xi}-S(X,\xi)^v_{\varphi_{\ast}\xi}$. Then
\[
V+S^{\top}_{\Phi}(X,\xi)=X^h_{\xi}-S^{\bot}_{\Phi}(X,\xi)\in\mathcal{H}^{\Phi}.
\]
Therefore,
\begin{gather*}
\begin{split}
|\Phi_{\ast}(X^h_{\xi}-S^{\bot}_{\Phi}(X,\xi))|_{TN}^2 &=\Lambda(\xi)|X^h_{\xi}-S^{\bot}_{\Phi}(X,\xi)|_{TN}^2 \\
&=\Lambda(\xi)|X|_M^2+\Lambda(\xi)|S^{\bot}_{\Phi}(X,\xi)|_{TN}^2,
\end{split}
\end{gather*}
since $\skal{X^h_{\xi}}{S^{\bot}_{\Phi}(X,\xi)}_{TM}=\skal{X^h_{\xi}}{S(X,\xi)^v_{\xi}}=0$, and, by \eqref{emain},
\begin{gather*}
 \begin{split}
|\Phi_{\ast}(V+S^{\top}_{\Phi}(X,\xi))|_{TN}^2 &=|\Phi_{\ast}V|_{TN} \\
&=|(\phi_{\ast}X)^h_{\varphi_{\ast}\xi}+P(X,\xi)^v_{\varphi_{\ast}\xi}|_{TN}^2 \\
&=\lambda(x)|X|_M^2+|P(X,\xi)^v_{\varphi_{\ast}\xi}|_{TN}^2.
\end{split}
\end{gather*}
Hence \eqref{e14} holds.
\end{proof}

\begin{lemma} \label{lconst}
If $\Phi$ and $\varphi$ are both horizontally conformal, then $\lambda$ is constant.
\end{lemma}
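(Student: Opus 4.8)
The plan is to prove that $\operatorname{grad}\lambda=0$, equivalently that $\xi\lambda=0$ for every $\xi\in T_xM$ and every $x\in M$; constancy of $\lambda$ is then immediate. The computational engine is the subtraction of \eqref{e13} from \eqref{e14}, which gives
\[
\Lambda(\xi)\,|S^{\bot}_{\Phi}(X,\xi)|^2_{TN}=|P(X,\xi)^v_{\varphi_{\ast}\xi}|^2_{TN}-|B(X,\xi)^v_{\varphi_{\ast}\xi}|^2_{TN}
\]
for all $X\in\mathcal{H}^{\varphi}_x$ and $\xi\in T_xM$. As $\Lambda>0$, the left-hand side is nonnegative; the whole argument rests on showing, separately for vertical and for horizontal $\xi$, that the right-hand side is nonpositive. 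Both sides must then vanish, and reading off the consequences yields $\xi\lambda=0$.

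First I would take $\xi\in\mathcal{V}^{\varphi}_x$. Then $\varphi_{\ast}\xi=0$, so every vertical lift occurring above is based at $0\in T_{\varphi(x)}N$ and its $h_{r,s,\beta}$-norm is just the $\langle\cdot,\cdot\rangle_N$-norm. The orthogonality \eqref{cme4} gives $|B(X,\xi)^v_0|^2=|\varphi_{\ast}S(X,\xi)|^2_N+|P(X,\xi)|^2_N$, so the right-hand side of the engine identity collapses to $-|\varphi_{\ast}S(X,\xi)|^2_N$. Since $\varphi_{\ast}S(X,\xi)=\tfrac{1}{2\lambda}(\xi\lambda)\varphi_{\ast}X$ and $|\varphi_{\ast}X|^2_N=\lambda|X|^2_M$, this equals $-\tfrac{(\xi\lambda)^2}{4\lambda}|X|^2_M\le 0$. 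The sign forces $(\xi\lambda)^2=0$, hence $\xi\lambda=0$ for all vertical $\xi$; equivalently $\operatorname{grad}\lambda\in\mathcal{H}^{\varphi}$.

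Next I would take $\xi\in\mathcal{H}^{\varphi}_x$. Because $T$ takes values in $\mathcal{V}^{\varphi}$, each coefficient $\langle\xi,T(X,e_i)\rangle_M$ vanishes, so $P(X,\xi)=0$ and, by \eqref{cme1}, $B(X,\xi)=\varphi_{\ast}S(X,\xi)$. Now the right-hand side of the engine identity is $-|B(X,\xi)^v_{\varphi_{\ast}\xi}|^2_{TN}\le 0$, and since the vertical part of $h_{r,s,\beta}$ is positive definite, this forces $B(X,\xi)=0$, that is $\varphi_{\ast}S(X,\xi)=0$ for all horizontal $X,\xi$. Finally, using that $\operatorname{grad}\lambda$ is horizontal, I would specialize to the unit vector $X=\xi=\operatorname{grad}\lambda/|\operatorname{grad}\lambda|$ (supposing $\operatorname{grad}\lambda\neq0$): a direct computation gives $\varphi_{\ast}S(X,X)=\tfrac{1}{2\lambda}\varphi_{\ast}\operatorname{grad}\lambda$, so $\varphi_{\ast}\operatorname{grad}\lambda=0$, and injectivity of $\varphi_{\ast}$ on $\mathcal{H}^{\varphi}$ forces $\operatorname{grad}\lambda=0$, a contradiction. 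Hence $\operatorname{grad}\lambda=0$ and $\lambda$ is constant.

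The one genuinely nonobvious point—the main obstacle—is recognizing the sign sandwich: the difference of \eqref{e14} and \eqref{e13} is at once a nonnegative multiple of a squared norm (from $\Lambda>0$) and, after invoking \eqref{cme4} in the vertical case or $P=0$ in the horizontal case, the negative of a squared norm, so it is forced to vanish. The remaining ingredients are purely structural: the base point of each vertical lift is $\varphi_{\ast}\xi$, the tensor $T$ is $\mathcal{V}^{\varphi}$-valued, and $\varphi_{\ast}$ is an isomorphism on $\mathcal{H}^{\varphi}$.
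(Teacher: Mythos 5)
Your proof is correct and follows essentially the same route as the paper's: subtract \eqref{e13} from \eqref{e14}, use \eqref{cme4} for vertical $\xi$ to conclude $\operatorname{grad}\lambda$ is horizontal, use $P(X,\xi)=0$ for horizontal $\xi$ to get $\varphi_{\ast}S(X,\xi)=0$, and then specialize to $X=\xi=\operatorname{grad}\lambda$. The only cosmetic difference is that you conclude via injectivity of $\varphi_{\ast}$ on $\mathcal{H}^{\varphi}$ where the paper says $\operatorname{grad}\lambda$ is both horizontal and vertical, which is the same observation.
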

\begin{proof}
Let $X\in\mathcal{H}^{\varphi}$ and $\xi\in\mathcal{V}^{\varphi}$. Comparing \eqref{e13} and \eqref{e14} and using \eqref{cme4} we get
\[
|S^{\bot}_{\Phi}(X,\xi)|^2_{TN}+|(\varphi_{\ast}(S(X,\xi)))^v_0|^2=0,
\]
which implies, $\varphi_{\ast}(S(X,\xi))=0$, so $\xi\lambda=0$. It follows that ${\rm grad}\,\lambda$ is horizontal.

Assume now, $\xi\in\mathcal{H}^{\varphi}$. Then $P(X,\xi)=0$. Again, comparing \eqref{e13} and \eqref{e14} we get
\[
0=\Lambda(\xi)|S^{\bot}_{\Phi}(X,\xi)|_{TN}^2+|B(X,\xi)^v_{\varphi_{\ast}\xi}|_{TN}^2.
\]
Thus
\[
B(X,\xi)=0\quad\textrm{and}\quad S^{\bot}_{\Phi}(X,\xi)=0.
\]
Hence, as before, $\varphi_{\ast}S(X,\xi)=0$. Since ${\rm grad}\,\lambda$ is horizontal, we may put $X=\xi={\rm grad}\,\lambda$. Then
\[
0=\varphi_{\ast}S({\rm grad}\,\lambda,{\rm grad}\,\lambda)=-\frac{1}{2\lambda}|{\rm grad}\,\lambda|^2_M\varphi_{\ast}({\rm grad}\,\lambda).
\]
Therefore, ${\rm grad}\,\lambda$ is vertical. Finally, $\lambda$ is constant.
\end{proof}

\begin{lemma} \label{l4}
Assume $\dim N\geq 2$. Let $\Phi$ be a horizontally conformal map with dilatation $\Lambda$. Then $\varphi$ is horizontally conformal with constant dilatation $\lambda=\Lambda(0)$. Moreover, $\Lambda$ is constant and the horizontal distribution $\mathcal{H}^{\varphi}$ is integrable.
\end{lemma}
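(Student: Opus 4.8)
The plan is to read off horizontal conformality of $\varphi$ by restricting that of $\Phi$ to the zero section, and then to scale the base point in order to force both dilatations to be constant and $\mathcal{H}^{\varphi}$ to be integrable. First I would work at a zero vector $0_x\in T_xM$. There $\omega_{\alpha}(0)=1$ and the quadratic term drops out, so $h_{p,q,\alpha}$ restricts to the Sasaki metric $\skal{\pi_{\ast}\cdot}{\pi_{\ast}\cdot}_M+\skal{K\cdot}{K\cdot}_M$, and likewise $h_{r,s,\beta}$ on $TN$; both horizontal and vertical lifts at the origin are then isometric to the base. By Lemma \ref{l1}, since $B(X,0)=0$, one has $\Phi_{\ast}X^h_{0}=(\varphi_{\ast}X)^h_{0}$ and $\Phi_{\ast}X^v_{0}=(\varphi_{\ast}X)^v_{0}$, whence $\ker\Phi_{\ast 0_x}=(\mathcal{V}^{\varphi}_x)^h_0\oplus(\mathcal{V}^{\varphi}_x)^v_0$ and, orthogonally, $\mathcal{H}^{\Phi}_{0_x}=(\mathcal{H}^{\varphi}_x)^h_0\oplus(\mathcal{H}^{\varphi}_x)^v_0$. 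Evaluating the horizontal conformality of $\Phi$ on the pair $X^h_0,Y^h_0$ with $X,Y\in\mathcal{H}^{\varphi}_x$ then gives $\skal{\varphi_{\ast}X}{\varphi_{\ast}Y}_N=\Lambda(0)\skal{X}{Y}_M$, while surjectivity of $\Phi_{\ast}$ on $\mathcal{H}^{\Phi}_{0_x}$ forces $\varphi_{\ast}\mathcal{H}^{\varphi}_x=T_{\varphi(x)}N$. Thus $\varphi$ is horizontally conformal with $\lambda=\Lambda(0)$, and from now on (Remark \ref{rem1}) I may treat $\varphi$ as a submersion, so that Theorem \ref{tPhi}, Corollary \ref{cPhi} and Lemmas \ref{l5}, \ref{lconst} all apply.

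Since $\varphi$ and $\Phi$ are now both horizontally conformal, Lemma \ref{lconst} gives that $\lambda$ is constant. Constancy of $\lambda$ makes $S\equiv0$, so \eqref{emain} collapses to $\Phi_{\ast}X^h_{\xi}=(\varphi_{\ast}X)^h_{\varphi_{\ast}\xi}+P(X,\xi)^v_{\varphi_{\ast}\xi}$. By Corollary \ref{cPhi} the vector $X^h_{\xi}$ lies in $\mathcal{H}^{\Phi}_{\xi}$, so applying the horizontal conformality of $\Phi$ to $X^h_\xi,Y^h_\xi$ and using the orthogonal splitting of $h_{r,s,\beta}$ into base and fibre parts yields the key identity
\[
\skal{P(X,\xi)^v_{\varphi_{\ast}\xi}}{P(Y,\xi)^v_{\varphi_{\ast}\xi}}_{TN}=(\Lambda(\xi)-\lambda)\skal{X}{Y}_M,\qquad X,Y\in\mathcal{H}^{\varphi}_x .
\]

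Finally I would scale the base point. Since $P(X,\xi)=P(X,\xi^{\top})$ and $\varphi_{\ast}\xi=\varphi_{\ast}\xi^{\bot}$, I replace $\xi^{\bot}$ by $u\xi^{\bot}$ and, taking $Y=X$ with $|X|_M=1$, expand the fibre inner product of $h_{r,s,\beta}$ to get
\[
\omega_{\beta}(u\varphi_{\ast}\xi^{\bot})^{r}\big(|P(X,\xi^{\top})|_N^2+s\,u^2\skal{P(X,\xi^{\top})}{\varphi_{\ast}\xi^{\bot}}_N^2\big)=\Lambda(\xi^{\top}+u\xi^{\bot})-\lambda .
\]
The right-hand side and the factor $\omega_{\beta}$ do not depend on $X$, so the bracket must be constant over the unit sphere of $\mathcal{H}^{\varphi}_x$ for each $u$; evaluating at $u=0$ and then at $u\neq0$ shows that the linear functional $X\mapsto\skal{P(X,\xi^{\top})}{\varphi_{\ast}\xi^{\bot}}_N$ has constant modulus on that sphere. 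As $\dim\mathcal{H}^{\varphi}_x=\dim N\ge2$, a nonzero linear functional cannot have constant modulus on the unit sphere, so it vanishes; letting $\varphi_{\ast}\xi^{\bot}$ range over $T_{\varphi(x)}N$ forces $P\equiv0$, and since the $\varphi_{\ast}e_i$ are independent this is exactly $T=0$, i.e.\ $\mathcal{H}^{\varphi}$ is integrable. The key identity then gives $\Lambda\equiv\lambda$, closing the proof.

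The step I expect to be the main obstacle is precisely this last extraction of $T=0$, and in particular the degenerate parameter value $s=0$: there the $u^2$-term disappears and the key identity only asserts that $P(\cdot,\xi^{\top})$ is a conformal map on $\mathcal{H}^{\varphi}_x$, which the skew-symmetry of $T$ alone does not rule out (it is compatible with Clifford-type structures). To handle that case one must feed in more of the horizontal space $\mathcal{H}^{\Phi}_{\xi}$, pairing $\Phi_{\ast}X^h_{\xi}$ against the images of the vertical-type generators $X^v_{\xi}-q\omega_q(\xi)\skal{X}{\xi}\xi^v_{\xi}+\omega_{\alpha}(\xi)^p(\nabla^M_{\xi}X)^h_{\xi}$ of Theorem \ref{tPhi}; this produces the additional constraint on $T$ that is invisible to the diagonal identity above, and it is here that the interplay of the coefficients $p,q,r,s$ and $\alpha,\beta$ enters.
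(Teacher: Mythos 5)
Your first half---reading off the horizontal conformality of $\varphi$ on the zero section and then invoking Lemma \ref{lconst} to make $\lambda$ constant---is exactly the paper's argument, and your ``key identity'' $\skal{P(X,\xi)^v_{\varphi_{\ast}\xi}}{P(Y,\xi)^v_{\varphi_{\ast}\xi}}_{TN}=(\Lambda(\xi)-\lambda)\skal{X}{Y}_M$ is just the polarized form of \eqref{e13} once $S=0$. The problem is where you yourself locate it: the extraction of $T=0$. Your $u$-scaling argument genuinely works when $s\neq 0$ (and $n\geq 2$), and there it is a legitimately different route from the paper's. But when $s=0$ the identity only says that $X\mapsto P(X,\xi^{\top})$ is a conformal linear map of $\mathcal{H}^{\varphi}_x$ into $T_{\varphi(x)}N$, and since $P(\cdot\,,\eta)$ is built from the skew form $\skal{\eta}{T(\cdot,\cdot)}_M$, conformality is perfectly consistent with $T\neq 0$: for $n=2$ every nonzero skew form produces a conformal $P$ (the integrability tensor of the Hopf fibration is of this kind), and Clifford-type examples exist in higher even dimensions. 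Since $s\geq 0$ is the only restriction on $h_{r,s,\beta}$, and $s=0$ is precisely the Sasaki case that Theorem \ref{hmt} ultimately needs, this is not a removable corner case; as written, your proof does not establish the lemma.

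You correctly diagnose the fix---pair against the vertical-type generators of $\mathcal{H}^{\Phi}_{\xi}$---but you do not carry it out, and carrying it out is the actual content of the paper's proof. For $\xi\in\mathcal{V}^{\varphi}$ the paper takes $X^v_{\xi}+\omega_{\alpha}(\xi)^p(\nabla^M_{\xi}X)^h_{\xi}\in\mathcal{H}^{\Phi}_{\xi}$ and $Y^h_{\xi}\in\mathcal{H}^{\Phi}_{\xi}$; horizontal conformality of $\Phi$ together with \eqref{e12} and \eqref{cme2} gives
\[
(\Lambda(\xi)-\lambda-\lambda(1+\alpha|\xi|^2_M)^p)\skal{\xi}{T(Y,X)}_M=\skal{B(\nabla^M_{\xi}X,\xi)}{B(Y,\xi)}_N,
\]
and replacing $\xi$ by $t\xi$ (using $\Lambda(t\xi)-\lambda=t^2(\Lambda(\xi)-\lambda)$ for vertical $\xi$, from \eqref{e13}, and the tensoriality of $B$) makes the two sides scale differently in $t$: in the limit $t\to 0$ one side leaves $\lambda\skal{\xi}{T(Y,X)}_M$ and the other vanishes, so $T=0$ for every choice of $p,q,r,s,\alpha,\beta$. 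Then $S=0$ and $T=0$ give $B=0$ on $\mathcal{H}^{\varphi}\times TM$ by Corollary \ref{clmain}, and \eqref{e13} yields $\Lambda\equiv\lambda$. To complete your proof you must supply this (or an equivalent) off-diagonal computation; the constraint it encodes is invisible to the diagonal identity on which your argument rests.
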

\begin{proof}
Let $X,Y\in\mathcal{H}^{\varphi}_x$, $0=0_x\in T_xM$. Then by Corollary \ref{cPhi}, $X^h_0,Y^h_0\in\mathcal{H}^{\Phi}_0$. By \eqref{l1e2}, $\Phi_{\ast}(X^h_0)=(\varphi_{\ast}X)^h_0$ and $\Phi_{\ast}(Y^h_0)=(\varphi_{\ast}Y)^h_0$. Thus
\[
\Lambda(0)\skal{X}{Y}_M=\skal{\Phi_{\ast}X^h_0}{\Phi_{\ast}Y^h_0}_{TN}=\skal{\varphi_{\ast}X}{\varphi_{\ast}Y}_N.
\]
Hence $\varphi$ is conformal with dilatation $\lambda(x)=\Lambda(0_x)$. By Lemma \ref{lconst} $\lambda$ is constant. Thus $S=0$. 

Let $X,Y\in\Gamma(\mathcal{H}^{\varphi})$, $\xi\in\mathcal{V}^{\varphi}$. Then $X^v_{\xi}+\omega_{\alpha}(\xi)^p(\nabla_{\xi}X)^h_{\xi}, Y^h_{\xi}\in\mathcal{H}^{\Phi}_{\xi}$ by Theorem \ref{tPhi} and Corollary \ref{cPhi}. Moreover, by Corollary \ref{clmain}
\[
\skal{B(Y,\xi)}{\varphi_{\ast}X}_N=\lambda\skal{\xi}{T(Y,X)}_M.
\]
Hence, by horizontal conformality of $\Phi$ and \eqref{e12} we get
\[
(\Lambda(\xi)-\lambda-\lambda(1+\alpha|\xi|^2)^p)\skal{\xi}{T(Y,X)}_M=\skal{B(\nabla^M_{\xi}X,\xi)}{B(Y,\xi)}_N.
\]
Put $A(\xi)=\Lambda(\xi)-\lambda$. Then by \eqref{e13}, $A(t\xi)=t^2 A(\xi)$. Hence, replacing $\xi$ by $t\xi$ and assuming for simplicity $|\xi|_M=1$, we get
\[
t^2(A(\xi)\skal{\xi}{T(Y,X)}_M-\skal{B(\nabla^M_{\xi}X,\xi)}{B(Y,\xi)}_M)=\lambda(1+\alpha t^2)^p\skal{\xi}{T(Y,X)}_M.
\]
Therefore, $\skal{\xi}{T(Y,X)}_M=0$ and, since $\xi\in\Gamma(\mathcal{V}^{\varphi})$ was taken arbitrary, $T=0$. This implies, together with the fact that $S=0$ and Corollary \ref{clmain} that
\[
B(X,\xi)=0\quad \textrm{for $X\in\mathcal{H}^{\varphi}$ and $\xi\in TM$}.
\]
Hence, by \eqref{e13}, $\Lambda(\xi)=\lambda$ for any $\xi$.
\end{proof}

\section{Main results}
\subsection{Differential as a horizontally confomal map}
Consider a smooth map $\varphi:(M,\skal{\cdot}{\cdot}_M)\to (N,\skal{\cdot}{\cdot}_N)$, $\dim M>\dim N\geq 2$, between Riemanian manifolds. Let $\Phi=\varphi_{\ast}:(TM,\tilh)\to (TN,h)$, where $\tilh=h_{p,q,\alpha}$ and $h=h_{r,s,\beta}$ are Cheeger--Gromoll type metrics.
\begin{theorem} \label{maint}
$\Phi$ is horizontally conformal if and only if $\varphi$ is totally geodesic and horizontally conformal with constant dilatation $\lambda$, $p\alpha=r\beta=0$ and $q=\lambda s$. Then, the dilatation $\Lambda$ of $\Phi$ is constant and equal to $\lambda$, horizontal distribution $\mathcal{H}^{\varphi}$ is integrable and vertical distribution $\mathcal{V}^{\varphi}$ is totally geodesic.
\end{theorem}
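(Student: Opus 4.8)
The plan is to treat the two implications separately; by Remark \ref{rem1} I may assume throughout that $\varphi$ is a submersion, so that the description of $\mathcal H^\Phi_\xi$ in Theorem \ref{tPhi} is available. The guiding observation is that when $p\alpha=0$ with $\alpha>0$ one has $\omega_\alpha(\xi)^p\equiv 1$, so the nonlinear conformal factor of $\tilh$ collapses and only the Cheeger--Gromoll correction $q\langle K\cdot,\xi\rangle_M\langle K\cdot,\xi\rangle_M$ survives; likewise $\omega_\beta(\zeta)^r\equiv1$ when $r\beta=0$.

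For the converse, assume $\varphi$ is totally geodesic and horizontally conformal with constant $\lambda$, that $p\alpha=r\beta=0$ and $q=\lambda s$. Total geodesy means $B=0$, whence $S=T=0$ by Corollary \ref{clmain}(3) and $P=0$ by its definition, so by \eqref{l1e1}, \eqref{l1e2} and \eqref{emain} the map acts simply as $\Phi_\ast X^h_\xi=(\varphi_\ast X)^h_{\varphi_\ast\xi}$ and $\Phi_\ast X^v_\xi=(\varphi_\ast X)^v_{\varphi_\ast\xi}$. I would then verify $\langle\Phi_\ast A,\Phi_\ast B\rangle_{TN}=\lambda\langle A,B\rangle_{TM}$ on the spanning set \eqref{tPhie2} of $\mathcal H^\Phi_\xi$, splitting into the pairings of two horizontal lifts, of a horizontal lift with a generator, and of two generators. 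The first two follow at once from horizontal conformality of $\varphi$ and $\pi_\ast$-compatibility; the last is the computational core, where $\omega_q(\xi)(1+q|\xi|^2_M)=1$ together with $q=\lambda s$ is what makes the $\langle\cdot,\xi\rangle$-terms of the two metrics agree, giving $\Lambda\equiv\lambda$.

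For the forward direction, Lemma \ref{l4} already supplies that $\varphi$ is horizontally conformal with constant $\lambda=\Lambda(0)$, that $\Lambda\equiv\lambda$, that $S=0$, $T=0$ (hence $\mathcal H^\varphi$ integrable), and that $B(X,\xi)=0$ whenever $X\in\mathcal H^\varphi$. What remains is $p\alpha=r\beta=0$, $q=\lambda s$ and the total geodesy of $\varphi$. I would extract these by substituting the rescaled base point $\xi\mapsto t\xi$ into the conformality identity $|\Phi_\ast G_X|^2_{TN}=\lambda|G_X|^2_{TM}$, where $G_X$ is the generator in \eqref{tPhie2}, and reading it as an identity in $t$. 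A purely vertical $\xi$ sends the target base point to $0$ (so $\omega_\beta(0)^r=1$ and the target $s$-term drops), while \eqref{l1e2} makes the fibre term $B((\nabla^M_\xi X)^\top,\xi^\top)^v$ enter $\Phi_\ast G_X$ with no counterpart in $|G_X|^2_{TM}$; matching successive powers of $t$ forces simultaneously $p\alpha=0$, $B((\nabla^M_\xi X)^\top,\xi^\top)=0$ and $(\nabla^M_\xi X)^\top=0$. A purely horizontal $\xi$ then collapses the identity (using $\omega_\alpha^p\equiv1$) to $(1+s\lambda t^2)=(1+qt^2)(1+\beta\lambda t^2)^r$ for all $t$, which forces $r\beta=0$ and $q=\lambda s$.

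It remains to package the conclusions. The vanishing $(\nabla^M_\xi X)^\top=0$ for vertical $\xi$ says precisely that $\mathcal V^\varphi$ is totally geodesic, and since $B$ already vanishes whenever one argument is horizontal while $B(\eta_1,\eta_2)=-\varphi_\ast(\nabla^M_{\eta_1}\eta_2)$ on the fibres, this upgrades to $B=0$, i.e. $\varphi$ totally geodesic; the constancy of $\Lambda$ and the integrability of $\mathcal H^\varphi$ were recorded in Lemma \ref{l4}. I expect the genuine obstacle to sit in the vertical--scaling step: there the coefficient constraint $p\alpha=0$ and the fibre constraints are entangled in a single function of $t$ built from $(1+\alpha t^2|\xi|^2_M)^{p}$, $(1+\alpha t^2|\xi|^2_M)^{2p}$ and the quadratic fibre data, so one must argue by the leading growth in $t$ and the vanishing of individual coefficients that all of them are forced at once, rather than obtaining $p\alpha=0$ cleanly in isolation.
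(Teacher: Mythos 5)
Your strategy is the same as the paper's: invoke Lemma \ref{l4} to get constancy of $\lambda$, $\Lambda\equiv\lambda$, $S=T=0$ and $B(X,\xi)=0$ for horizontal $X$, then test horizontal conformality of $\Phi$ on the generators of $\mathcal{H}^{\Phi}_{\xi}$ from Theorem \ref{tPhi} with the base point rescaled, $\xi\mapsto t\xi$, first for vertical and then for horizontal $\xi$; the converse is a direct verification. But the horizontal step, as you state it, does not work. The identity you display,
\[
1+s\lambda t^2=(1+qt^2)(1+\beta\lambda t^2)^r ,
\]
is the one obtained when $X$ is proportional to $\xi$, and it does \emph{not} force $r\beta=0$: take $q=0$, $r=1$, $\beta=s>0$; then both sides equal $1+s\lambda t^2$ for all $t$, yet $r\beta\neq 0$ and $q\neq\lambda s$. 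The paper separates the two conclusions: it first takes $X$ basic with $X_x$ orthogonal to $\xi$ (this is where $\dim N\geq 2$ enters), for which both the $q$-correction in the generator and the $s$-term of $h_{r,s,\beta}$ drop out and the conformality identity collapses to $\omega_{\beta}(\varphi_{\ast}\xi)^r=1$, whence $r\beta=0$; only afterwards does the case of general horizontal $X,\xi$ yield $q=\lambda s$. Equivalently, writing $X=a\xi+X_{\perp}$ with $|\xi|_M=1$, the full identity reads
\[
(1+\beta\lambda t^2)^{-r}\left(a^2\omega_q(t\xi)^2(1+s\lambda t^2)+|X_{\perp}|^2_M\right)=a^2\omega_q(t\xi)+|X_{\perp}|^2_M,
\]
and both conclusions follow only by varying $a$ and $X_{\perp}$ independently; your version records only the $a$-component.

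The vertical step is likewise only sketched, and the difficulty you flag at the end is real but not resolved by ``matching successive powers of $t$'' alone: the coefficient of $t^2$ is the single relation $-p\alpha\lambda|X|^2_M+\lambda|\nabla^M_{\xi}X|^2_M=2\skal{B(\nabla^M_{\xi}X,\xi)}{\varphi_{\ast}X}_N$, which mixes all three unknowns, and the $t^4$-coefficient mixes them again. The paper's additional ingredient is the evaluation $\skal{B(\nabla^M_{\xi}X,\xi)}{\varphi_{\ast}X}_N=\lambda|\nabla^M_{\xi}X|^2_M$ (available because $B(\eta,\zeta)=-\varphi_{\ast}(\nabla^M_{\eta}\zeta)$ for vertical arguments, $\nabla^M_{\xi}X$ is vertical once $T=0$, and $\skal{X}{\xi}_M=0$ can be differentiated), which converts the $t^2$-coefficient into $-p\alpha|X|^2_M=|\nabla^M_{\xi}X|^2_M$ and yields $p\alpha=0$ and $\nabla^M_{\xi}X=0$ simultaneously. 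Some identification of this kind is indispensable; without it your plan terminates in an underdetermined system. The remaining packaging (total geodesy of $\mathcal{V}^{\varphi}$, upgrading to $B\equiv 0$, and the converse computation) is correct and agrees with the paper.
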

\begin{proof}
Assume $\Phi$ is horizontally conformal. Then by Lemma \ref{l4}, $\varphi$ is horizontally conformal with constant dilatation $\lambda$, the dilatation $\Lambda$ of $\Phi$ is constant and equal to $\lambda$ and by Corollary \ref{clmain}, $B(X,\xi)=0$ for $X\in \mathcal{H}^{\varphi}$ and $\xi\in TM$. 

Let $X\in\Gamma(\mathcal{H}^{\varphi})$ and $\xi\in\Gamma(\mathcal{V}^{\varphi})$. Then $A=X^v_{\xi}+\omega_{\alpha}(\xi)^p(\nabla^M_{\xi}X)^h_{\xi}\in \mathcal{H}^{\Phi}_{\xi}$ by Theorem \ref{tPhi}. Since $T=0$, by \eqref{e12} $\nabla^M_{\xi}X\in\mathcal{V}^{\varphi}$. Then $|\Phi_{\ast}A|^2_{TN}=\lambda|A|^2_{TM}$ implies
\begin{multline*}
\lambda|X|^2_M(1+\alpha|\xi|^2_M)^p(1-(1+\alpha|\xi|^2_M)^p)+\lambda|\nabla^M_{\xi}X|^2_M \\
=|B(\nabla^M_{\xi}X,\xi)|^2_N+2(1+\alpha|\xi|^2_M)^p\skal{B(\nabla^M_{\xi}X,\xi)}{\varphi_{\ast}X}_N.
\end{multline*}
Replacing $\xi$ by $t\xi$, assuming $|\xi|^2_M=1$ and computing the second derivative at $t=0$, we get
\[
-\alpha p\lambda|X|^2_M+\lambda|\nabla^M_{\xi}X|^2_M=2\skal{B(\nabla^M_{\xi}X,\xi)}{\varphi_{\ast}X}_N.
\]
Since $\skal{B(\nabla_{\xi}X,\xi)}{\varphi_{\ast}X}_N=\lambda|\nabla^M_{\xi}X|^2_M$, we obtain $-p\alpha|X|^2_M=|\nabla^M_{\xi}X|^2_M$. Therefore, $p\alpha=0$ and $\nabla^M_{\xi}X=0$. This implies $B=0$ globally, so $\varphi$ is totally geodesic. In particular, horizontal distrubution is integrable and vertical distribution is totally geodesic.

Moreover, for $X\in\Gamma(\mathcal{H}^{\varphi})$ basic and $\xi\in\mathcal{H}^{\varphi}_x$ such that $\xi$ and $X_x$ are orthogonal, by Theorem \ref{tPhi} and the fact that $\alpha p=0$ we have $A=X^v_{\xi}+(\nabla^M_{\xi}X)^h_{\xi}\in \mathcal{H}^{\Phi}_{\xi}$. Since $\nabla^M_{\xi}X\in\mathcal{H}^{\varphi}_x$, the condition $|\Phi_{\ast}A|^2_{TN}=\lambda|A|^2_{TM}$ simplfies to
\[
\omega_{\beta}(\varphi_{\ast}\xi)^r=1.
\]
This implies $r\beta=0$. Now, for arbitrary $X,\xi\in\Gamma(\mathcal{H}^{\varphi})$, $A=X^v_{\xi}-q\omega_q(\xi)\skal{X}{\xi}\xi^v_{\xi}+(\nabla^M_{\xi}X)^h_{\xi}\in\mathcal{H}^{\Phi}$. Then, as above by horizontal conformality of $\Phi$, after some computations
\[
q+q^3\omega_q(\xi)^2|\xi|^4_M-2q^2\omega_q(\xi)|\xi|^2_M =\lambda s+\lambda s q^2\omega_q(\xi)^2|\xi|^2_M-2\lambda s q\omega_q(\xi)|\xi|^2_M.
\]
Hence $q=\lambda s$.

Conversely, assume $\varphi$ is totally geodesic and horizontally conformal with constant dilatation $\lambda$, $p\alpha=r\beta=0$ and $q=\lambda s$. By Theorem \ref{tPhi} $\Phi:TM\to TN$ is a submersion and vertical and horizontal distributions $\mathcal{V}^{\Phi}$ and $\mathcal{H}^{\Phi}$ are given by \eqref{tPhie1} and \eqref{tPhie2}, respectively. It remains to show that $\Phi_{\ast}:\mathcal{H}^{\Phi}\to T(TN)$ is conformal, but this follows immediately by simple calculations.
\end{proof}

\subsection{Differential as a harmonic morphism}
Let $\varphi:(M,\skal{\cdot}{\cdot}_M)\to (N,\skal{\cdot}{\cdot}_N)$. Consider notation from the first section. We say that $\varphi$ is {\it harmonic} if its tension field $\tau(\varphi)={\rm tr}B$ vanishes. If $e_1,\ldots, e_n$ is an orthonormal frame on $M$ then
\begin{equation} \label{e10}
\tau(\varphi)=\sum_i(\nabla^{\varphi}_{e_i}\varphi_{\ast}e_i-\varphi_{\ast}(\nabla^M_{e_i}e_i)).
\end{equation} 

A map $\varphi:M\to N$ is said to be a {\it harmonic morphism} if for every harmonic function $f:G\to\mathbb{R}$ defined on an open subset $G$ of $N$ with $\varphi^{-1}(G)$ nonempty, the composition $f\circ \varphi$ is a harmonic map on $\varphi^{-1}(G)$. There is a useful characterisation of harmonic morphisms due to Fuglede and Ishihara. Namely, see \cite{bw}, $\varphi$ is a harmonic morphisms if and only if it is horizontally conformal and harmonic. Moreover, if $\varphi$ is horizontally conformal, then its tension field simplifies to \cite{bw}
\begin{equation} \label{e11}
\tau(\varphi)=-\frac{n-2}{2}\varphi_{\ast}({\rm grad}({\rm ln}\lambda))-(m-n)\varphi_{\ast}(\kappa_{\varphi}),
\end{equation} 
where $m=\dim M$, $n=\dim N$ and $\kappa_{\varphi}$ is the mean curvature vector field of the fibres of $\varphi$,
\[
\kappa_{\varphi}=\frac{1}{m-n}\sum_i(\nabla^M_{e_i}e_i))^{\bot},
\]
where $e_1,\ldots,e_{m-n}$ is an orthonormal frame of $\mathcal{V}^{\varphi}$.

Equip tangent bundle $TM$ with Cheeger--Gromoll type metric $h_{p,q,\alpha}$. Let $\omega_q(\xi)=(1+q|\xi|^2_M)^{-1}$. The Levi--Civita connection $\nabla^{TM}$ corresponding to $h_{p,q,\alpha}$ evaluated at $\xi$ is given by \cite{mun}
\begin{align*}
\nabla^{TM}_{X^h}Y^h &=(\nabla_XY)^h-\frac{1}{2}(R(X,Y)\xi)^v, \\
\nabla^{TM}_{X^h}Y^v &=(\nabla_XY)^v+\frac{1}{2}\omega(\xi)^p(R(\xi,Y)X)^h, \\
\nabla^{TM}_{X^v}Y^h &=\frac{1}{2}\omega(\xi)^p(R(\xi,X)Y)^h, \\
\begin{split}
\nabla^{TM}_{X^v}Y^v &=-\alpha p\omega(\xi)(\skal{X}{\xi}_MY+\skal{Y}{\xi}_MX)^v \\
&+(\alpha p\omega(\xi)+q)\omega_q(\xi)\skal{X}{Y}_M\xi^v_{\xi}+\alpha pq\omega(\xi)\omega_q(\xi)\skal{X}{\xi}_M\skal{Y}{\xi}_M\xi^v_{\xi}.
\end{split}
\end{align*}

Let $\varphi:(M,\skal{\cdot}{\cdot}_M)\to (N,\skal{\cdot}{\cdot}_N)$, $\dim M>\dim N\geq 2$, be a smooth map between Riemanian manifolds, $\Phi=\varphi_{\ast}:(TM,\tilh)\to (TN,h)$, where $\tilh=h_{p,q,\alpha}$ and $h=h_{r,s,\beta}$ are Cheeger--Gromoll type metrics.
\begin{theorem} \label{hmt}
$\Phi$ is a harmonic morphism if and only if $\varphi$ is a totally geodesic harmonic morphism, the dilatation of $\varphi$ is constant and $\tilh, h$ are Sasaki metrics.
\end{theorem}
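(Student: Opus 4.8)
\emph{Strategy.} The plan is to combine the Fuglede--Ishihara characterization with Theorem \ref{maint} and the tension field formula \eqref{e11}. Recall that $\Phi$ is a harmonic morphism if and only if it is horizontally conformal and harmonic. The horizontal conformality is already settled by Theorem \ref{maint}: it forces $\varphi$ to be totally geodesic and horizontally conformal with constant dilatation $\lambda$, the relations $p\alpha=r\beta=0$ and $q=\lambda s$, and the dilatation $\Lambda$ of $\Phi$ to be constant with $\Lambda=\lambda$; moreover $\mathcal{H}^{\varphi}$ is integrable and $\mathcal{V}^{\varphi}$ totally geodesic. Since a totally geodesic map is automatically harmonic, $\varphi$ is already a harmonic morphism. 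Thus the only genuinely new content is to extract the Sasaki condition from harmonicity of $\Phi$ in the forward direction, and conversely to verify harmonicity of $\Phi$ in the Sasaki case.

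\emph{Reduction of harmonicity.} For the forward direction I would apply \eqref{e11} to the horizontally conformal submersion $\Phi:(TM,\tilh)\to(TN,h)$, whose dilatation $\Lambda$ is constant. Since ${\rm grad}({\rm ln}\,\Lambda)=0$, the formula collapses to $\tau(\Phi)=-(\dim TM-\dim TN)\Phi_{\ast}(\kappa_{\Phi})=-2(m-n)\Phi_{\ast}(\kappa_{\Phi})$, where $\kappa_{\Phi}$ is the mean curvature vector of the fibres of $\Phi$ computed for $\tilh$. Because $\kappa_{\Phi}$ lies in $\mathcal{H}^{\Phi}$ and $\Phi_{\ast}$ is injective there, harmonicity of $\Phi$ is equivalent to $\kappa_{\Phi}=0$, i.e.\ to minimality of the fibres of $\Phi$ in $(TM,\tilh)$.

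\emph{Computation of $\kappa_{\Phi}$.} The heart of the argument is then this mean curvature. Using Theorem \ref{tPhi} I would write down the generators of $\mathcal{V}^{\Phi}_{\xi}$ and $\mathcal{H}^{\Phi}_{\xi}$, pick a $\tilh$-orthonormal frame $\{E_a\}$ of $\mathcal{V}^{\Phi}_{\xi}$ adapted to those generators, and compute the $\mathcal{H}^{\Phi}$-components of $\nabla^{TM}_{E_a}E_a$ via Munteanu's connection formulas. The simplifications from Theorem \ref{maint} are decisive: since $\varphi$ is totally geodesic with $B=0$, $S=0$, $T=0$ and $p\alpha=0$, the weight $\omega_{\alpha}^{p}\equiv1$ and the connection reduces to its Sasaki part plus the single extra term $\nabla^{TM}_{X^v}Y^v=q\,\omega_q(\xi)\skal{X}{Y}_M\,\xi^v_{\xi}$, the only place where $q$ survives. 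The curvature terms contribute the mean curvature of the fibres in the Sasaki metric, which I expect to vanish by the rigidity already forced (a totally geodesic Riemannian submersion with integrable horizontal distribution is locally a metric product, so the mixed curvature terms drop out). What remains is proportional to $q$; evaluating at $\xi$ with nonzero horizontal part, where $\xi^v_{\xi}$ acquires a nontrivial $\mathcal{H}^{\Phi}$-component, and comparing coefficients after the substitution $\xi\mapsto t\xi$ exactly as in Theorem \ref{maint}, forces $q=0$. Then $p\alpha=0$ yields that $\tilh$ is Sasaki, and $s=q/\lambda=0$ (as $\lambda>0$) yields that $h$ is Sasaki. I expect this explicit evaluation of $\kappa_{\Phi}$ to be the main obstacle: the distributions $\mathcal{V}^{\Phi}$ and $\mathcal{H}^{\Phi}$ are twisted, with horizontal and vertical lifts mixed through the $\nabla^M_{\xi}$ and $q\omega_q$ terms, so building the orthonormal frame and tracking the $\mathcal{H}^{\Phi}$-projections while separating the genuinely $q$-dependent contribution from the curvature terms is delicate.

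\emph{Converse.} This reverses the above. Assuming $\varphi$ is a totally geodesic harmonic morphism with constant dilatation and that $\tilh,h$ are Sasaki, the parameters satisfy $p\alpha=r\beta=0$ and $q=0=\lambda s$, so Theorem \ref{maint} gives that $\Phi$ is horizontally conformal with constant dilatation $\Lambda=\lambda$. It remains to check $\kappa_{\Phi}=0$; but with $q=0$ the extra term vanishes and the same computation, now genuinely in the Sasaki setting and using total geodesy of $\varphi$, shows the fibres of $\Phi$ are minimal, so $\tau(\Phi)=0$. By Fuglede--Ishihara, $\Phi$ is a harmonic morphism.
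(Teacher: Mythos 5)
Your proposal is correct and follows essentially the same route as the paper: Fuglede--Ishihara plus Theorem \ref{maint} reduces everything to showing that $\kappa_{\Phi}=0$ forces $q=0$, and the converse is the same computation run backwards. The evaluation of $\kappa_{\Phi}$ that you flag as the main obstacle turns out to be short: taking $\xi\in\mathcal{H}^{\varphi}$, the frame of $\mathcal{V}^{\Phi}_{\xi}$ is simply $\{(e_i)^h_{\xi},(e_i)^v_{\xi}\}$ (no twisting, since $\mathcal{V}^{\varphi}$ is totally geodesic and $p\alpha=0$), the curvature terms vanish outright because $R(e_i,e_i)\xi=0$, and one gets $\Phi_{\ast}(\kappa_{\Phi})=\tfrac12(\varphi_{\ast}\kappa_{\varphi})^h_{\varphi_{\ast}\xi}+\tfrac12 q\,\omega_q(\xi)(\varphi_{\ast}\xi)^v_{\varphi_{\ast}\xi}$, whose vertical part immediately gives $q=0$ without any $\xi\mapsto t\xi$ rescaling.
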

\begin{proof}
Assume $\Phi$ is a harmonic morphism. In particular, $\Phi$ is horizontally conformal. Hence, by Theorem \ref{maint}, $\varphi$ is horizontally conformal, the dilatations of $\Phi$ and $\varphi$ are constant, $\Lambda=\lambda$, and $\varphi$ is totally geodesic. Therefore $\varphi$ is harmonic, so $\varphi$ is a harmonic morphism. Thus, by \eqref{e11}, $\kappa_{\varphi}=0$, $\kappa_{\Phi}=0$. Moreover, by Theorem \ref{maint}, $p\alpha=r\beta=0$ and $q=\lambda s$. Let $\xi\in\mathcal{H}^{\varphi}$ and let $e_1,\ldots,e_{m-n}$ be an orthonormal frame for $\mathcal{V}^{\varphi}_{\xi}$, $m=\dim M$, $n=\dim N$. Since vertical distribution $\mathcal{V}^{\varphi}$ is, by Theorem \ref{maint} totally geodesic, by Theorem \ref{tPhi} and Corollary \ref{cPhi} 
\[
E_i=(e_i)^h_{\xi},\quad F_i=(e_i)^v_{\xi},\quad i=1,\ldots,m-n,
\]
is an orthonormal frame for $\mathcal{V}^{\Phi}_{\xi}$. Then
\begin{align*}
\nabla^{TM}_{E_i}E_i &=(\nabla^M_{e_i}e_i)^h_{\xi}, \\
\nabla^{TM}_{F_i}F_i &=q\omega_q(\xi)\xi^v_{\xi}.
\end{align*}
Therefore,
\[
\Phi_{\ast}(\kappa_{\Phi})=\frac{1}{2}(\varphi_{\ast}(\kappa_{\varphi}))^h_{\varphi_{\ast}\xi}+\frac{1}{2}q\omega_q(\xi)(\varphi_{\ast}\xi)^v_{\varphi_{\ast}\xi}.
\]
Since $\Phi_{\ast}(\kappa_{\Phi})=0$ and $\varphi_{\ast}(\kappa_{\varphi})=0$, it follows that $q=0$. Hence, $s=0$ and $\tilh$, $h$ are Sasaki metrics.

Conversely, assume $\varphi$ is a totally geodesic harmonic morphism, the dilatation of $\varphi$ is constant and $\tilh$, $h$ are Sasaki metrics. Then, by Theorem \ref{maint} $\Phi$ is horizontally conformal with constant dilatation. Moreover, $\varphi_{\ast}(\kappa_{\varphi})=0$. Hence, similarly as above,
\[
\Phi_{\ast}(\kappa_{\Phi})=\frac{1}{2}(\varphi_{\ast}(\kappa_{\varphi}))^h=0,
\]
and by \eqref{e11}, $\Phi$ is a harmonic morphism.
\end{proof}

\begin{remark} Condition $p\alpha=0$ for Cheeger--Gromoll metric $h_{p,q,\alpha}$ is equavalent to condition $p=0$ for the metric $h_{p,q}$. It follows that in the case of horizontal conformality of a differential, and in consequence for a differential to be a harmonic morphism, it is sufficient to consider only $h_{p,q}$ metrics introduced in \cite{blw2}.
\end{remark}

\end{document}